\DeclareMathOperator{\Rm}{Rm}
\DeclareMathOperator{\Ric}{Ric}
\DeclareMathOperator{\Vol}{Vol}
\newcommand*{\rom}[1]{\rm {\expandafter\@slowromancap\romannumeral #1@}}
\def\XXint#1#2#3{{\setbox0=\hbox{$#1{#2#3}{\int}$ }
		\vcenter{\hbox{$#2#3$ }}\kern-.6\wd0}}
\protected\def\vts{%
	\ifmmode
	\mskip0.5\thinmuskip
	\else
	\ifhmode
	\kern0.08334em
	\fi
	\fi
}
\numberwithin{equation}{section}
\newtheorem{Theorem}{Theorem}[section]
\newtheorem{Proposition}[Theorem]{Proposition}
\newtheorem{Lemma}[Theorem]{Lemma}
\newtheorem{Remark}[Theorem]{Remark}
\newtheorem{Corollary}[Theorem]{Corollary}
\theoremstyle{definition}
\newtheorem{Definition}[Theorem]{Definition}
\def \tl {\overline{\ell}}
\begin{document}
	\title[]
	{An $\epsilon$-regularity theorem for Perelman's reduced volume}
	
	\author{Liang Cheng,  Yongjia Zhang}

	\date{}
	
	\subjclass[2000]{
		Primary 53C44; Secondary 53C42, 57M50.}

	\keywords{$\epsilon$-regularity theorem, Ricci flow,  reduced entropy}
	
	\thanks{Liang Cheng's Research partially supported by  
	National Natural Science Foundation of China 12171180}
	
	\thanks{Yongjia Zhang's research is  supported by National Natural Science Foundation of China NSFC12301076 and Shanghai Sailing Program 23YF1420400.}
	
	\address{School of Mathematics and Statistics  $\&$ Key Laboratory of Nonlinear Analysis and Applications (Ministry of Education), Central  China Normal University, Wuhan, 430079, P.R.China}

	\email{chengliang@ccnu.edu.cn }

\address{School of Mathematical Sciences, Shanghai Jiao Tong University, Shanghai, China, 200240}	
		\email{sunzhang91@sjtu.edu.cn }
	\maketitle

	\begin{abstract}
In this article, we prove an $\epsilon$-regularity theorem for Perelman's reduced volume. We show that on a Ricci flow, if Perelman's reduced volume is close to $1$, then the curvature radius at the base point cannot be too small.
	\end{abstract}

	\section{Introduction}
	The $\epsilon$-regularity theorem is an important tool in geometric analysis, primarily used to study the regularity (smoothness) of the solutions to nonlinear partial differential equations and geometric variational problems. The idea of the $\epsilon$-regularity theorems is that if a certain geometric quantity is sufficiently small in a small region, then the solution in that region is regular in some sense (e.g., smooth).  The $\epsilon$-regularity theorems are particularly useful in the study of minimal surfaces, harmonic maps, Yang-Mills fields, and related problems. 
	
	For geometric flows, many $\epsilon$-regularity theorems are derived through the utilization of monotonicity formulas. The first $\epsilon$-regularity theorem for the Ricci flow is Perelman's pseudolocality theorem \cite{P1}, one may see \cite{tw}\cite{w2}\cite{Bam20c} for recent developments in that respect. These  pseudolocality theorems were obtained by using one of Perelman's monotonicity formulas --- the $\mathcal{W}$-functional.
    
    Another relevant result concerning the $\mathcal{W}$-functional is Hein-Naber's $\epsilon$-regularity theorem \cite{HN}: for each $C>0$ there exists an $\epsilon=\epsilon(n, C)>0$ such that the following holds. Let $\left(M^n, g_t\right)_{t\in I}$ be a Ricci flow with bounded curvature within each compact time interval. Assume that there are a space-time piont $(x,t)\in M\times I$ and a scale $r>0$ with $[t-r^2,t]\subset I$, satisfying
    \begin{align*}
        R_{g_{t-r^2}}\ge -\frac{C}{r^2}, \quad \inf_{\tau\in(0,2r^2)}\mu(g_{t-r^2},\tau)\ge -C,
    \end{align*}
    and 
$$
\mathcal{W}_{x,t}(r^2) \geq-\epsilon,
$$
then we have
$$
r_{\mathrm{Rm}}\left(x,t\right)^2 \geq \epsilon r^2.
$$
Here $\mathcal{W}_{x,t}(\tau) $ is Perelman's $\mathcal{W}$-functional with its test function being the conjugate heat kernel based at $(x,t)$ (also known as Perelman's pointed entropy), $\mu$ is Perelman's $\mu$-functional, and $r_{\mathrm{Rm}}$ is the curvature radius defined in Definition \ref{def:def of r}. 

Bamler \cite{Bam20a} proved a stronger $\epsilon$-regularity theorem for the Nash entropy: 
there is a constant $\epsilon(n)>0$ with the following property. Let $\left(M^n, g_t\right)_{t\in I}$ be a Ricci flow with bounded curvature within each compact time interval. Let $(x,t)\in M\times I$ be a space-time point and  $r>0$ a scale with $[t-r^2,t]\subset I$. If
$$\mathcal{N}_{x, t}\left(r^2\right) \geq-\epsilon,$$ then $$r_{\mathrm{Rm}}(x, t) \geq \epsilon r.$$    
    
 Monotonicity formulas of geometric flows are closely related to their rigidity properties. Here by rigidity we mean that if a geometric quantity is \emph{equal} to the standard value, then the geometric object is \emph{identical} to the standard model. For instance, if Perlman's pointed entropy or the Nash entropy is ever equal to zero, then the Ricci flow is the (static) Euclidean space. In view of this fact, $\epsilon$-regularity theorems and rigidity theorems often appear in pairs, because one monotonicity formula would give rise to both. Indeed, these rigidity properties are essential to the proofs of the aforementioned
 $\epsilon$-regularity theorems. 
 
Another important monotonicity formula discovered by Perelman is the reduced volume, which also has the rigidity property that if the reduced volume is ever equal to 1, then the Ricci flow is the trivial one on the Euclidean space. Therefore, it is reasonable to expect an $\epsilon$-regularity theorem for the reduced volume. The object of the present paper is to prove such a theorem. All definitions and notations in our results can be found in Section 2.1.

\begin{Theorem}\label{main_thm}
 There is a dimensional constant $\epsilon(n)>0$ with the following property. Let
$(M^n, g_t)_{t\in I}$ be a complete  Ricci flow with bounded curvature within each compact time interval. Let $(x,t)\in M\times I$ be a space-time point and $r>0$ a scale with $[t-2r^2,t]\subset I$. If, furthermore
\begin{equation}\label{small_reduced_volume}
\mathcal{V}_{x,t}
(r^2) \ge 1-\epsilon,
\end{equation}
then we have $r_{\operatorname{Rm}} (x, t) \ge  r$; here $\mathcal{V}$ and $r_{\Rm}$ are defined in \eqref{eq:def of V} and Definition \ref{def:def of r}, respectively.
\end{Theorem}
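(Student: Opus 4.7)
The plan is to argue by contradiction and blow-up, invoking the rigidity theorem stating that any Ricci flow whose reduced volume equals $1$ at a single finite scale must be the static Euclidean flow. By parabolic rescaling we may assume $r = 1$. Suppose the conclusion fails: there exist Ricci flows $(M_k^n, g_{k, t})_{t \in I_k}$ with base points $(x_k, t_k) \in M_k \times I_k$, $[t_k - 2, t_k] \subset I_k$, satisfying $\mathcal{V}_{x_k, t_k}(1) \ge 1 - \epsilon_k$ with $\epsilon_k \to 0$, yet $s_k := r_{\Rm}(x_k, t_k) < 1$. Parabolically rescale each flow by $1/s_k$: the rescaled basepoint has $r_{\Rm} = 1$, and by scale-invariance and monotonicity of $\mathcal{V}$ in $\tau$, the rescaled flow satisfies $\mathcal{V}_{x_k, t_k}(\tau) \ge 1 - \epsilon_k$ for all $\tau \in (0, 1/s_k^2]$; in particular $\mathcal{V}_{x_k, t_k}(1) \ge 1 - \epsilon_k$ on the rescaled flow, while the time interval now has length $2/s_k^2 \ge 2$.

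I would then extract a smooth Hamilton--Cheeger--Gromov pointed limit $(M_\infty, g_{\infty, t}, x_\infty, t_\infty)$ on a parabolic neighborhood of the basepoint, with $r_{\Rm}(x_\infty, t_\infty) = 1$ and $\mathcal{V}_{x_\infty, t_\infty}(\tau_\ast) = 1$ for some $\tau_\ast \in [1,\infty]$ (where $\tau_\ast = \lim 1/s_k^2$, possibly after passing to a subsequence). When $\tau_\ast < \infty$, Perelman's equality case in the reduced-volume monotonicity forces the limit to be a gradient shrinking soliton with $\mathcal{V} \equiv 1$, which must be the static Euclidean flow; when $\tau_\ast = \infty$, Yokota's asymptotic rigidity yields the same conclusion. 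Either case contradicts $r_{\Rm}(x_\infty, t_\infty) = 1$.

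The main obstacle is the compactness step and the passage of the reduced volume to the limit. The reduced distance is a non-local $L$-geodesic quantity, so the pointwise lower bound on $\mathcal{V}$ alone does not immediately supply the local curvature and non-collapsing bounds needed for smooth convergence. I would overcome this by exploiting the near-equality in Perelman's reduced-volume monotonicity formula: its error term is an integral of non-negative quantities measuring the deviation of the reduced distance $\ell$ from its Euclidean profile, so pushing this into an averaged $L^2$ estimate over the available backward interval yields quantitative approximate flatness in a parabolic region, which combined with the normalization $r_{\Rm}(x_k, t_k) = 1$ produces the necessary curvature control. Alternatively, one may appeal to Bamler's $\mathbb{F}$-compactness and argue that the limit metric flow is smooth near the basepoint before invoking the rigidity theorem. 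In either approach, the convergence of the reduced distances, and hence of the reduced volume, must be verified by a variational comparison of $L$-geodesics between the approximating flows and the limit, using uniform heat-kernel and volume bounds to justify the interchange of the limit with the $\int e^{-\ell}(4\pi\tau)^{-n/2}\,dV$ integral.
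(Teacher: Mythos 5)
Your overall strategy (normalize by $r_{\Rm}$, take a limit, invoke rigidity of the reduced volume) is the same one the paper follows, and you correctly flag the compactness step as the real obstacle. However, the proposal contains genuine gaps precisely at that step.

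First, your primary suggestion --- turning the near-equality $\mathcal{V}\ge 1-\epsilon_k$ into ``quantitative approximate flatness in a parabolic region'' via the nonnegative error terms in the monotonicity formula --- is circular. Deriving pointwise curvature bounds from smallness of an integrated quantity is exactly the content of the $\epsilon$-regularity theorem being proved; there is no elementary shortcut, and the normalization $r_{\Rm}(x_k,t_k)=1$ only gives $|\Rm|\le 1$ on a single unit parabolic ball, with no control on larger scales. Consequently a Hamilton--Cheeger--Gromov limit is not available, and the phrase ``smooth pointed limit on a parabolic neighborhood'' would in any case be too local to invoke the equality case of the reduced volume monotonicity, which is a global statement on the flow. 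This is exactly why the paper abandons smooth compactness: non-collapsing follows from the reduced volume bound (via Perelman's no-local-collapsing and \cite[Theorem 8.1]{Bam20a}), but curvature bounds do not, so the limit is taken only in the $\mathbb{F}$-sense and may be singular.

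Second, even granting your alternative route via Bamler's $\mathbb{F}$-compactness, the passage of the reduced volume to the limit is the crux and is left unaddressed. One cannot simply assert that $\mathcal{V}_{x_\infty,t_\infty}(\tau_\ast)=1$ on the limit. The paper does this in two nontrivial steps: it first proves local uniform $C^{0,1}$ bounds on the reduced distance $\ell_i$ on compact subsets of the regular part (Section 3 and Proposition 4.4), a delicate $\mathcal{L}$-geodesic estimate that hinges on the curvature bound only holding in a local parabolic region and on Perelman's Harnack inequality $f_i\le\ell_i$ applied together with the full support of the limit conjugate heat flow; it then combines $f_i\le\ell_i$ with $\mathcal{V}_{x_i,0}\to 1$ to force $f\equiv\ell$ on the regular part in the limit, rather than attempting to control $\mathcal{V}$ on the limit flow directly. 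Your ``variational comparison of $L$-geodesics'' remark gestures at this but does not supply it.

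Third, because the $\mathbb{F}$-limit is a priori only partially regular, the rigidity cannot be invoked as an off-the-shelf equality case for smooth shrinkers. The paper instead derives the PDE for $\ell$ (equivalently $f$) on the regular part, uses Perelman's conjugate heat computation to obtain the soliton identity there, and must still justify that the Nash entropy $\mathcal{N}(\tau)$ of the limit vanishes --- a step that requires Bamler's $L^p$ integral estimates (Proposition \ref{Prop:Bamler Lp}) to control the contribution from the singular set. Your proposal to invoke Yokota's asymptotic gap theorem in the $\tau_\ast=\infty$ case does not apply, since the hypotheses there (an ancient flow with Ricci bounded below, and convergence of $\mathcal{V}_{x,t}(\tau)$ as $\tau\to\infty$) are not established for the limit. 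Finally, even after the limit is identified as Gaussian on $(-1,0)$, the contradiction with $r_{\Rm}(x_i,0)=1$ is not immediate because the $\mathbb{F}$-convergence is only on the open interval $(-1,0)$; the paper closes this gap with both forward and backward pseudolocality theorems, which your sketch omits entirely.
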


\begin{Remark}
  Yokota \cite{YT} proved a gap theorem for the reduced volume: if an ancient Ricci flow with Ricci curvature bounded from below satisfies
	\begin{align*}
	\lim_{\tau\to\infty}\mathcal{V}_{x,t}(\tau)\ge 1-\epsilon(n),
	\end{align*}
	where $(x,t)$ is a space-time point and $\epsilon(n)$ is a dimensional constant, then the Ricci flow is a static Euclidean space. The second author \cite{Z21} also proved a similar gap theorem for the asymptotic $\mathcal{W}$-entropy on ancient Ricci flows.
    Theorem \ref{main_thm} should be viewed as a complement of \cite{YT}.
\end{Remark}

Let us recall the ideas of the proofs of \cite{HN} and \cite{Bam20a}; each proof breaks down to the following steps:
\begin{enumerate}[(1)]
\item Assume the $\epsilon$-regularity theorem is false, so one can find a sequence of counterexamples --- the Nash entropies go to zero, but the curvature radii are small.
\item Adjust the sequence by a point-picking process, so that the sequence still satisfies the conditions of step (1), and at the same time the geometry is locally uniformly bounded.
\item Apply the compactness theorem, such as \cite{RH5}, to take a smooth limit and obtain a contradiction.
\end{enumerate}

In fact, many $\epsilon$-regularity theorems are proved in the above way. However, when one applies the above idea to Perelman's reduced volume, one would encounter a difficulty in Step (2). The reason is that, unlike the Nash entropy, there is no estimate showing how Perelman's reduced volume depends on the base point in the general case. Nevertheless, the success of Yokota's gap theorem \cite{YT} was due to the fact that, when considering the so-called asymptotic reduced volume 
$$\overline{\mathcal{V}}(x,t)=\lim_{\tau\to\infty}\mathcal{V}_{x,t}(\tau)$$
on an ancient Ricci flow with Ricci curvature bounded from below, and, in particular, the way it depends on the base point $(x,t)$, the fuzzy stuff introduced by local geometry will be eliminated by taking the limit in $\tau$, and hence one can obtain
\begin{align*}
\overline{\mathcal{V}}(x,t) \le \overline{\mathcal{V}}(y,s)\quad \text{whenever}\quad t>s.
\end{align*}
With this estimate, the point-picking argument is clearly available. In fact, this is also the idea of the proof of \cite{Z21}.

However, in the proof of Theorem \ref{main_thm}, we will not perform a point-picking argument, but will overcome this difficulty with much more sophisticated techniques --- Bamler's $\mathbb{F}$-compactness and partial regularity theories. Namely, in Step (2), we will perform a simple normalization instead of point-picking, and we will take an $\mathbb{F}$-limit in Step (3), instead of smooth limit, to obtain the contradiction.

 It is also straightforward to observe that, by replacing the reduced volume by Nash entropy and applying the similar ``no-point-picking-argument'' of the current article, Bamler's $\epsilon$-regularity theorem \cite[Theorem 10.2]{Bam20a} can be improved: if $[t-2r^2,t]\subset I$ and $$\mathcal{N}_{x,t}(r^2)\ge -\epsilon(n),$$ then $$r_{\Rm}(x,t)\ge r;$$
 here (and in Theorem \ref{main_thm} as well) $r_{\Rm}$ can even be the curvature radius defined as \cite[Definition 10.2]{Bam20a}. 
Since the proof is a simple adaptation of our argument, we leave the details to the reader.

Finally, we give a corollary of  Theorem \ref{main_thm}.
\begin{Corollary}\label{coro_main}
For any $\epsilon>0$, there is a positive number $\delta=\delta(\epsilon,n)>0$ with the following property. Let
    $(M^n, g_t)_{t\in I}$ be a complete  Ricci flow with bounded curvature within each compact time interval. Let $(x,t)\in M\times I$ be a space-time point and $r>0$ a scale with $[t-r^2,t]\subset I$. If, furthermore,
\begin{equation}\label{small_reduced_volume}
\mathcal{V}_{x,t}
(r^2) \ge 1-\delta,
\end{equation}
then, for all $r'\in(0,\epsilon^{-1}r)$, we have
\begin{align*}
\operatorname{Vol}_{g_t}\big(B_{g_t}(x,r')\big)\ge (1-\epsilon)\omega_n (r')^n,
\end{align*}
where $\omega_n$ is the volume of the unit ball in $\mathbb{R}^n$.
\end{Corollary}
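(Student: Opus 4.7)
The plan is to argue by contradiction and compactness, invoking Theorem~\ref{main_thm} together with Bamler's $\mathbb{F}$-compactness and partial regularity theories. Suppose the corollary fails. Then there is $\epsilon>0$ and a sequence of counterexamples which, after parabolic rescaling, I may normalize so that $r_i=1$, $t_i=0$, $[-1,0]\subset I_i$, $\mathcal{V}_{x_i,0}(1)\ge 1-1/i$, while $\operatorname{Vol}_{g^i_0}(B_{g^i_0}(x_i,r_i'))<(1-\epsilon)\,\omega_n(r_i')^n$ for some $r_i'\in(0,\epsilon^{-1})$. Passing to a subsequence, I assume $r_i'\to r_\infty'\in[0,\epsilon^{-1}]$. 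The monotonicity of $\mathcal{V}$ in $\tau$ gives $\mathcal{V}_{x_i,0}(1/2)\ge\mathcal{V}_{x_i,0}(1)\ge 1-1/i$; since $[-1,0]\subset I_i$, Theorem~\ref{main_thm} applied at the scale $\bar r=1/\sqrt 2$ yields the uniform bound $r_{\Rm}(x_i,0)\ge 1/\sqrt 2$, which controls the geometry on a fixed parabolic neighborhood of $(x_i,0)$.

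The case $r_\infty'=0$ is immediate: the curvature bound combined with Hamilton's smooth compactness forces $\operatorname{Vol}(B(x_i,r_i'))/(r_i')^n\to\omega_n$, contradicting the assumed deficit. The substantive case is $r_\infty'>0$. Here I would use the standard Jensen-type inequality relating Perelman's reduced volume to the Nash entropy, namely $\mathcal{N}_{x_i,0}(1)\ge\log\mathcal{V}_{x_i,0}(1)$, to convert the hypothesis $\mathcal{V}_{x_i,0}(1)\to 1$ (and $\mathcal{N}\le 0$) into $\mathcal{N}_{x_i,0}(1)\to 0$. Then Bamler's $\mathbb{F}$-compactness provides a subsequential $\mathbb{F}$-limit metric flow $\mathcal{X}$ pointed at some $(x_\infty,0)$. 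Continuity of the Nash entropy under $\mathbb{F}$-convergence gives $\mathcal{N}_{x_\infty,0}(1)=0$, and Bamler's rigidity then identifies $\mathcal{X}$ with the static Euclidean metric flow $(\mathbb{R}^n,g_E)$. Since this limit has empty singular set, Bamler's partial regularity promotes the $\mathbb{F}$-convergence to smooth pointed Cheeger--Gromov convergence on every compact region; in particular $\operatorname{Vol}_{g^i_0}(B(x_i,r_i'))\to\omega_n(r_\infty')^n$, contradicting the deficit for $i$ large.

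The main obstacle is to invoke the rigidity and partial regularity in the correct form: one must be careful that attainment of the maximal value of the Nash entropy (equivalently, of the reduced volume) at a single point and scale of an $\mathbb{F}$-limit forces the \emph{entire} limiting metric flow to be the static Euclidean flow, and that Bamler's partial regularity then yields smooth convergence on balls of arbitrary size, so that volume convergence holds at any fixed radius in $(0,\epsilon^{-1}]$. Both facts are available from Bamler's structure theory (\cite{Bam20a},\cite{Bam20c}), but care is required because the available data is only on the finite time interval $[-1,0]$ rather than on an ancient flow; this is the step I expect to occupy the main technical effort.
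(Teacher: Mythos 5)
The overall skeleton of your proof---contradiction, use Theorem~\ref{main_thm} together with monotonicity of $\mathcal{V}$ to bound $r_{\mathrm{Rm}}(x_i,0)$ from below, obtain non-collapsing, extract an $\mathbb{F}$-limit, show it is Euclidean, and convert $\mathbb{F}$-convergence to smooth local convergence to get volume convergence---agrees with the paper's proof, which simply says ``by the same argument as in the proof of Theorem \ref{main_thm}'' the rescaled sequence converges locally smoothly to the static Euclidean flow. Your case split at $r'_\infty=0$ vs.\ $r'_\infty>0$ is not present in the paper but is a harmless refinement.

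However, there is a genuine gap in the step that actually identifies the limit as Euclidean: you invoke a ``standard Jensen-type inequality'' $\mathcal{N}_{x,t}(\tau)\ge\log\mathcal{V}_{x,t}(\tau)$ to transfer closeness of $\mathcal{V}$ to $1$ into closeness of $\mathcal{N}$ to $0$. No such inequality is established in this paper or in the references it cites, and I do not believe it is true in the generality you need. Note the circumstantial evidence: if $\mathcal{N}\ge\log\mathcal{V}$ held, then $\mathcal{V}_{x,t}(r^2)\ge 1-\epsilon$ would directly give $\mathcal{N}_{x,t}(r^2)\ge\log(1-\epsilon)$, and Theorem~\ref{main_thm} would be an immediate corollary of Bamler's $\epsilon$-regularity for the Nash entropy---making the paper's Sections~3 and~4 unnecessary. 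The quantities $\mathcal{N}=\int f\,d\nu-n/2$ and $\log\mathcal{V}=\log\int e^{f-\ell}\,d\nu$ are not comparable via Jensen alone, because Jensen applied to $\int e^{f-\ell}\,d\nu$ gives $\log\mathcal{V}\ge\int f\,d\nu-\int\ell\,d\nu$, i.e.\ $\mathcal{N}\le\log\mathcal{V}+\int\ell\,d\nu-\tfrac{n}{2}$; this is the wrong direction, and the term $\int\ell\,d\nu-\tfrac{n}{2}$ is not a priori sign-controlled. The absence of such a pointwise/integral passage from $\mathcal{V}$ to $\mathcal{N}$ is precisely why the paper goes through the reduced-distance function directly: using Perelman's inequality $f\le\ell$ (Lemma~\ref{Lm:Perelman's LYH Harnack}), the assumption $\mathcal{V}\to 1$ forces $f=\ell$ a.e.\ on the regular part of the $\mathbb{F}$-limit; combined with the local $C^{0,1}$ estimates of Section~3 and the distributional identities \eqref{eq_l_1}--\eqref{eq_l_5} for $\ell$, this yields the soliton identity on $\mathcal{R}$ and hence $\mathcal{N}\equiv 0$ (Proposition~\ref{Prop:soliton structure}). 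To repair your argument, replace the Jensen step by a direct appeal to the chain of Lemmas and Propositions established for Theorem~\ref{main_thm} (Propositions~\ref{Prop:local uniform estimate of l}, \ref{Prop:soliton structure}, and Theorem~\ref{Thm:local uniform convergence}), which is in fact what the paper's proof of Corollary~\ref{coro_main} does.
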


	\section{Preliminaries }

	\subsection{Definitions}
	
	Let $(M^n,g_t)_{t\in I}$ be a solution to the Ricci flow
\begin{align*}
	\frac{\partial g_t}{\partial t}=-2\Ric_{g_t}.
\end{align*}
Fixing a $t\in I$, Perelman's	$\mathcal{L}$-energy for a piecewise $C^1$ curve $\gamma(\sigma):[0,\tau]\to M$, where $[t-\tau,\tau]\subset I$, is defined as
\begin{align}\label{l-energy} \mathcal{L}(\gamma)=\int^{\tau}_0
	\sqrt{\sigma}\big(R_{g_{t-\sigma}}(\gamma(\sigma))+|\gamma'(\sigma)|_{g_{t-\sigma}}^2\big)d\sigma.
\end{align}	
The critical points of the $\mathcal{L}$-energy are called minimal $\mathcal{L}$-geodesics. Similar to the standard theory of geodesics in Riemannian geometry, the minimizer with respect to fixed end points always exists and is almost always unique, should the Ricci flow in question have a Ricci curvature lower bound \cite{Y1}.

Thus, $\mathcal{L}$-geodesics can be used to define a type of space-time distance, namely, \emph{Perelman's reduced distance}. Let $(x,t)$, $(y,s)\in M\times I$ be two space-time points in the Ricci flow, where $s<t$. Define
\begin{align*}
L_{x,t}(y,t-s):=\inf_{\gamma}\mathcal{L}(\gamma)=\inf_{\gamma}\int_0^{t-s}\sqrt{\tau}\left(R_{g_{t-\tau}}(\gamma(\tau))+|\gamma'(\tau)|^2_{g_{t-\tau}}\right)d\tau,
\end{align*}
where the infimum is taken among all piecewise $C^1$-curves $\gamma:[0,t-s]\to M$ with $\gamma(0)=x$, $\gamma(t-s)=y$, then
\begin{align*}
\ell_{x,t}(y,t-s):=\frac{1}{2\sqrt{t-s}}L_{x,t}(y,t-s)
\end{align*} 
is the \emph{reduced distance from $(x,t)$ to $(y,s)$}, and a minimizer $\gamma$ is called a minimizing $\mathcal{L}$-geodesic from $(x,t)$ to $(y,s)$. Furthermore, if we fix $(x,t)$, then the space-time function
\begin{align*}
\ell_{x,t}(\,\cdot\, ,\, \cdot\,): M\times(0,t-\inf I) \to \mathbb{R}
\end{align*}
is called the \emph{reduced distance based at $(x,t)$}. We remark here that the second variable of $\ell_{x,t}(\,\cdot\, ,\, \cdot\,)$ is the backward time starting from $t$. However, sometimes it is also convenient to consider the forward time. So we define
\begin{align}\label{eq:def of tl}
\tl_{x,t}(y,s):=\ell_{x,t}(y,t-s),\quad (y,s)\subset M\times \big(I\cap(-\infty,t)\big).
\end{align}

Perelman's \emph{reduced volume} is defined as 
\begin{align}\label{eq:def of V}
\mathcal{V}_{x,t}(\tau):=\int_M (4\pi\tau)^{-\frac{n}{2}}e^{-\ell_{x,t}(\cdot,\tau)}\,dg_{t-\tau},
\end{align}
where $(x,t)$ is the base point and $\tau\ge 0$ is the backward time from $t$. The reduced volume is one of the two important monotonic quantities discovered by Perelman --- $\mathcal{V}_{x,t}(\tau)$ is always decreasing with respect to $\tau$.

\begin{Definition}\label{def:def of r}
Let $\left(M^n, g_t\right)_{t\in I}$ be a complete Ricci flow. Given $(x, t) \in M \times I$, the curvature radius at $(x,t)$ is defined as
	$$
	r_{\mathrm{Rm}}(x, t) := \sup \left\{r>0\,\bigg|\, [t-r^2,t]\subset I,\ \sup _{P_r(x, t)}|\mathrm{Rm}| \leq r^{-2}\right\},
	$$
where 	
$$
P_r(x, t) := B_{g_t}(x, r) \times\left[t-r^2, t\right].
$$
\end{Definition}

\subsection{Perelman's $\mathcal{L}$-geometry}	 Consider a Ricci flow $(M,g_t)_{t\in I}$ and fix $(x,t) \in M\times I$. We shall recall some basic properties of $\ell_{x,t}$. By the first variation of the $\mathcal{L}$-energy \eqref{l-energy}, the $\mathcal{L}$-geodesic equation is (see \cite[(7.2)]{P1})
\begin{eqnarray}\label{l-geodesic}
	\nabla^{g_{t-\tau}}_{\gamma'}\gamma'(\tau)-\frac{1}{2}\nabla^{g_{t-\tau}} R_{g_{t-\tau}}(\gamma(\tau))+\frac{1}{2\tau}\gamma'(\tau)+2\Ric_{g_{t-\tau}}(\gamma'(\tau))=0.
\end{eqnarray}	
Given any $v\in T_xM$, denote by $\gamma_v$ the $\mathcal{L}$-geodesic satisfying $\lim\limits_{\sigma\to 0}\sqrt{\sigma}\gamma'(\sigma) = v$. Then the $\mathcal{L}$-exponential map $\mathcal{L}\text{exp}^{\tau}_{x,t}:T_xM\to M$ is defined as
	$$\mathcal{L}\text{exp}^{\tau}_{x,t}(v)=\gamma_v(\tau).$$

Similar to the standard theory of geodesics, we define
\begin{align*}
\Omega^{TM}_{x,t}(\tau)=\Big\{ v\in T_xM\ \Big\vert & \ \gamma_v|_{[0,\tau]} :[0,\tau] \to M \text{ is the unique minimizing }
 \mathcal{L}\text{-geodesic}	
\\
& \text{\ from $(x,t)$ to $(\gamma_v(\tau),t-\tau)$}; 
\\
& \text{\ $x$ and $\gamma_v(\tau)$ are not conjugate along }\gamma_v.\Big\}.
\end{align*}
Correspondingly, 	
\begin{align*}
\Omega_{x,t}(\tau)=\Big\{y\in M\ \Big\vert\ &\  \text{There is a unique minimizing } \mathcal{L}\text{-geodesic }\gamma:[0,\tau] \to M 
\\	
&	\text{\ from $(x,t)$ to $(y,t-\tau)$;}\text{ $x$ and $y$ are not conjugate along }  \gamma.\Big\}.
\end{align*}
It is well known that
$$
\Omega_{x,t}(\tau)=\mathcal{L}\text{exp}_{x,t}^{\tau}\big(\Omega_{x,t}^{TM}(\tau)\big),
$$
and the  $\mathcal{L}$-cut-locus is
	defined as
	$$
	C_{x,t}(\tau)=M\backslash \Omega_{x,t}(\tau).
	$$
When the base point is understood, we also omit the subindices in the notations introduced above.
	
	Generalizing Perelman's results in \cite{P1}, Ye \cite{Y1} studied the properties of the $\ell$-function and the $\mathcal{V}$-function assuming only a lower bound for the Ricci curvature. We now state these useful results.

	\begin{Theorem}[Proposition 2.7, Proposition 2.11, and Lemma 2.14 in \cite{Y1}]\label{Y}
Let $(M^n,g_t)_{t\in I}$ be a Ricci flow such that the Ricci curvature of each time-slice is bounded from below. Fixing a base-point $(x,t)\in M\times I$, the following hold:
\begin{enumerate}
\item For any $(y,s)\in M\times I$ with $s<t$, there exists a minimal $\mathcal{L}$-geodesic connecting  $(x,t)$ and $(y,s)$. In other words, for $\tau>0$ with $t-\tau\in I$, the map
		$\mathcal{L}\text{exp}^{\tau}_{x,t}$ is onto. 
\item  $L_{x,t}$ is locally Lipschitz in space-time.
\item For each $\tau$ with $t-\tau\in I$, $C_{x,t}(\tau)\subset M$ is a closed set of zero Remannian measure in $(M,g_{t-\tau})$. Consequently, $\bigcup_{0<\tau,t-\tau\in I}C_{x,t}(\tau)\times \{t-\tau\}$
		is a closed set of zero in space-time.
\end{enumerate}
	\end{Theorem}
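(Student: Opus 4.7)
The plan is to follow Perelman's arguments in \cite{P1} and to upgrade each step so that only a lower Ricci bound (rather than bounded curvature) is needed, which is what \cite{Y1} carries out. The unifying tool is the one-sided metric comparison: $\Ric\geq -K$ together with $\partial_t g=-2\Ric$ gives $\partial_t g\leq 2K\,g$, so for $s<t$ on any compact time interval where the bound holds one has $g_t\leq e^{2K(t-s)}g_s$; this lets one compare $g_t$-lengths of curves with their $g_s$-lengths up to exponential factors and will replace bounded curvature in every compactness argument.

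For part (1), fix $(y,s)$ with $s=t-\tau$ and take a minimizing sequence $\gamma_k:[0,\tau]\to M$ for $\mathcal{L}$. The change of variable $\sigma=\varsigma^2$ converts $\mathcal{L}(\gamma_k)$ into $\int_0^{\sqrt\tau}\!\big(2\varsigma^2 R_{g_{t-\varsigma^2}}+\tfrac{1}{2}|\beta_k'(\varsigma)|^2_{g_{t-\varsigma^2}}\big)\,d\varsigma$ with $\beta_k(\varsigma)=\gamma_k(\varsigma^2)$; scalar curvature is bounded below by $-nK$, so the functional is coercive and yields a uniform $H^1$ bound on $\beta_k$ with respect to the time-dependent metrics. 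The metric comparison above transfers this to a uniform bound with respect to the fixed metric $g_t$, confining the $\beta_k$ to a bounded $g_t$-neighborhood of $x$ and providing equicontinuity. Arzel\`a--Ascoli then extracts a uniform limit, and lower semicontinuity of $\mathcal{L}$ identifies it as a minimizer; smoothness follows from elliptic regularity for the $\mathcal{L}$-geodesic equation \eqref{l-geodesic}.

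For part (2), to bound $|L_{x,t}(y_1,s)-L_{x,t}(y_2,s)|$ one splices a minimizing $\mathcal{L}$-geodesic to $(y_1,s)$ with a short $g_s$-geodesic from $y_1$ to $y_2$, and estimates the extra energy contribution via the metric comparison; time continuity in $s$ is handled by an analogous splice in the time variable. For part (3), $C_{x,t}(\tau)$ is the union of the $\mathcal{L}$-conjugate locus together with the set of points admitting more than one minimizer. The former is a measure-zero set by Sard's theorem applied to the smooth map $\mathcal{L}\text{exp}_{x,t}^\tau$, while the latter is contained in the non-differentiability set of the Lipschitz function $L_{x,t}(\cdot,\tau)$ (by a first-variation calculation) and hence has measure zero by Rademacher's theorem. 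Closedness of $C_{x,t}(\tau)$ follows from part (1) by passing to limits of minimizing geodesics at accumulation points of the regular set.

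The main obstacle is part (1): the manifold is noncompact, and the $\sqrt\sigma$ weight in the definition of $\mathcal{L}$ is singular at $\sigma=0$, so a direct Arzel\`a--Ascoli argument fails. The substitution $\sigma=\varsigma^2$ regularizes the weight, and the one-sided metric comparison from the Ricci lower bound substitutes for bounded curvature in preventing minimizing sequences from escaping to infinity. Once these two ingredients are secured, parts (2) and (3) are essentially variations on standard Riemannian arguments adapted to Perelman's space-time Lagrangian.
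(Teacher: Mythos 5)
The paper does not prove Theorem \ref{Y}; it is cited verbatim from Proposition 2.7, Proposition 2.11, and Lemma 2.14 of Ye \cite{Y1}. Your sketch is consistent with the strategy in \cite{Y1}: the one-sided distortion estimate $g_t\leq e^{2K(t-s)}g_s$ from $\Ric\geq-K$ stands in for Perelman's two-sided distortion from bounded curvature in every compactness step; the substitution $\sigma=\varsigma^2$ regularizes the $\sqrt\sigma$ weight and gives coercivity together with the scalar-curvature lower bound $R\geq-nK$; Arzel\`a--Ascoli plus lower semicontinuity produces a minimizer; local Lipschitz continuity of $L_{x,t}$ is obtained by splicing; and the cut locus is estimated by Rademacher on the multi-minimizer set and Sard on the $\mathcal{L}$-conjugate locus. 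This is essentially the same route as the cited source.

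One place is underspecified: the closedness of $C_{x,t}(\tau)$, equivalently the openness of $\Omega_{x,t}(\tau)$. Compactness of minimizing $\mathcal{L}$-geodesics alone does not yield this. If $y\in\Omega_{x,t}(\tau)$ with unique non-degenerate minimizer of initial velocity $v$, and $y_k\to y$ each admit two distinct minimizers with initial velocities $v_k^1\neq v_k^2$, then both $v_k^1,v_k^2$ converge to $v$ (by uniqueness at $y$ and compactness); taking limits of minimizers produces no contradiction by itself. You must also invoke the inverse function theorem: non-conjugacy at $y$ makes $\mathcal{L}\text{exp}^\tau_{x,t}$ a local diffeomorphism near $v$, hence locally injective, which contradicts $\mathcal{L}\text{exp}^\tau_{x,t}(v_k^1)=\mathcal{L}\text{exp}^\tau_{x,t}(v_k^2)$ with $v_k^1\neq v_k^2\to v$. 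The case where $y_k$ has a conjugate minimizer is handled analogously by continuity of the Jacobian of $\mathcal{L}\text{exp}^\tau_{x,t}$. With that addition, your argument matches what is done in \cite{Y1}.

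Two smaller remarks. First, you implicitly assume the Ricci lower bound is uniform on compact time intervals (which is what makes the distortion exponential locally in time); this should be stated, as the theorem's hypothesis is only per time slice. Second, the regularity of the minimizer is an ODE bootstrap for the $\mathcal{L}$-geodesic equation \eqref{l-geodesic}, not elliptic regularity; the conclusion is the same but the terminology should be corrected.
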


Furthermore, we need the following analytic properties of the $\ell$-function. 

	\begin{Theorem}[\cite{P1}, see also Lemma 2.19 and Theorem 2.20 in \cite{Y1}]\label{Perelman}
		Let $(M^n,g_t)_{t\in I}$ be a Ricci flow such that the Ricci curvature of each time-slice is bounded from below. Let $ \ell:=\ell_{x,t}$ be the reduced distance function based at a fixed point $(x,t)$. Then on $ \bigcup_{0<\tau, t-\tau\in I}\Omega_{x,t}(\tau)\times \{\tau\}$ it holds that:
		\begin{align}
		&2\frac{\partial \ell}{\partial \tau}+|\nabla \ell|_{g_{t-\tau}}^2-R_{g_{t-\tau}}+\frac{\ell}{\tau}=0,\label{eq_l_1}\\
		&	\frac{\partial }{\partial \tau}\ell-\Delta_{g_{t-\tau}} \ell +	|\nabla \ell|_{g_{t-\tau}}^2-R_{g_{t-\tau}}+\frac{n}{2\tau} \geq 0,\label{eq_l_4}\\
		&	2\Delta_{g_{t-\tau}} \ell-|\nabla \ell|_{g_{t-\tau}}^2+R_{g_{t-\tau}}+\frac{\ell-n}{\tau} \le 0.\label{eq_l_5}
		\end{align}
Furthermore, (\ref{eq_l_4}) and (\ref{eq_l_5}) both hold in the sense of distribution. That is to say, for any $0<\tau_1<\tau_2$ with $t-\tau_2>\inf I$ and for any nonnegative Lipshcitz function $\phi$ compactly supported on $M\times [\tau_1,\tau_2]$, it holds that
\begin{eqnarray}\label{eq_l_6}
\int_{\tau_1}^{\tau_2}\int_{M}\Bigg(\langle\nabla \ell,\nabla\phi\rangle_{g_{t-\tau}}+\Big(\frac{\partial}{\partial\tau}\ell+|\nabla \ell|_{g_{t-\tau}}^2-R_{g_{t-\tau}}+\frac{n}{2\tau}\Big)\phi\Bigg)d{g_{t-\tau}}d\tau\geq0,
\end{eqnarray}
and, for any $\tau>0$ with $t-\tau>\inf I$ and any nonnegative Lipshcitz function $\phi$ compactly supported on $M$, it holds that
\begin{eqnarray}\label{eq_l_7}
\int_{M}\Bigg(-2\langle\nabla \ell,\nabla\phi\rangle_{g_{t-\tau}}+\Big(-|\nabla \ell|_{g_{t-\tau}}^2+R_{g_{t-\tau}}+\frac{\ell-n}{\tau}\Big)\phi\Bigg)d{g_{t-\tau}}\leq 0.
\end{eqnarray}
	\end{Theorem}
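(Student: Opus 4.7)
The plan is to establish the identity \eqref{eq_l_1} and the inequalities \eqref{eq_l_4}, \eqref{eq_l_5} pointwise on the smooth locus $\bigcup_{0<\tau,\,t-\tau\in I}\Omega_{x,t}(\tau)\times\{\tau\}$ using Perelman's first and second variation formulas for $\mathcal{L}$, and then promote the two inequalities to their distribution versions \eqref{eq_l_6}, \eqref{eq_l_7} via an upper-barrier (Calabi-type) argument that handles points in the $\mathcal{L}$-cut locus.

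For the pointwise step, fix $y\in\Omega_{x,t}(\tau)$ and let $\gamma:[0,\tau]\to M$ be the unique nondegenerate minimizing $\mathcal{L}$-geodesic from $(x,t)$ to $(y,t-\tau)$. The first variation of $\mathcal{L}$ at the spatial endpoint yields $\nabla L(y,\tau)=2\sqrt{\tau}\,\gamma'(\tau)$, and differentiating $L$ in $\tau$ at fixed $y$, combined with the $\mathcal{L}$-geodesic equation \eqref{l-geodesic} and the evolution of $R+|\gamma'|^2$ along the Ricci flow, expresses $\partial_\tau L$ in terms of $R(y)$, $|\gamma'(\tau)|^2$, and Perelman's $K$-integral $K(\gamma):=\int_0^\tau\sigma^{3/2}H(\gamma')(\sigma)\,d\sigma$. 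Converting to $\ell=L/(2\sqrt{\tau})$ and combining yields \eqref{eq_l_1}. For \eqref{eq_l_4}, I would sum the second variations of $\mathcal{L}$ along the rescaled frame fields $Y_i(\sigma)=\sqrt{\sigma/\tau}\,E_i(\sigma)$, where each $E_i$ is determined along $\gamma$ by a first-order ODE designed to make the cross terms vanish and normalized by $E_i(\tau)=e_i$ for an orthonormal frame $\{e_i\}$ at $y$; this produces an upper bound for $\Delta L$ that also involves $K(\gamma)$, and eliminating $K(\gamma)$ between the two expressions yields \eqref{eq_l_4}. Inequality \eqref{eq_l_5} is then obtained algebraically by subtracting \eqref{eq_l_4} from twice \eqref{eq_l_1}.

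For the distribution extension, given any $(y,\tau)$ with $t-\tau\in I$ and $y\in C_{x,t}(\tau)$, Theorem \ref{Y}(1) yields a minimizing $\mathcal{L}$-geodesic $\bar\gamma$ from $(x,t)$ to $(y,t-\tau)$. For all sufficiently small $\delta>0$, the point $y$ lies in $\Omega_{\bar\gamma(\delta),\,t-\delta}(\tau-\delta)$, so the shifted reduced distance $\ell^\delta$ is smooth near $(y,\tau-\delta)$; writing $L^\delta$ for the corresponding $L$-function, the concatenation function $\tilde\ell_\delta(z,\sigma):=\frac{1}{2\sqrt{\sigma}}\bigl(\mathcal{L}(\bar\gamma|_{[0,\delta]})+L^\delta(z,\sigma-\delta)\bigr)$ is smooth near $(y,\tau)$, satisfies $\tilde\ell_\delta\ge\ell$ with equality at $(y,\tau)$ (by the concatenation property of $\mathcal{L}$-length together with the fact that the tail $\bar\gamma|_{[\delta,\tau]}$ is minimizing for the shifted base point), and obeys the pointwise analogues of \eqref{eq_l_4}, \eqref{eq_l_5} at $(y,\tau)$ up to an error that vanishes as $\delta\to 0$. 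Testing against a nonnegative Lipschitz cutoff $\phi$, integrating by parts on the smooth locus (which has full measure by Theorem \ref{Y}(3)) and replacing $\ell$ by $\tilde\ell_\delta$ in shrinking neighborhoods of cut-locus points before letting $\delta\to 0$, one arrives at \eqref{eq_l_6} and \eqref{eq_l_7}; the locally Lipschitz regularity of $\ell$ from Theorem \ref{Y}(2) legitimizes the limiting operations. The main obstacle is the barrier analysis itself: one must verify that $\tilde\ell_\delta$ dominates $\ell$ in a full space-time neighborhood of $(y,\tau)$ rather than merely along $\bar\gamma$, control uniformly the error terms produced by the base-point shift $x\mapsto\bar\gamma(\delta)$, and carry the integration-by-parts through the cut locus, where the combination of Lipschitz regularity and the measure-zero property is essential.
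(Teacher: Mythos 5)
This theorem is cited, not proved, in the paper; it is taken directly from Perelman \cite{P1} and Ye \cite{Y1}. Your proposal reproduces the standard argument from those sources faithfully: the pointwise identities on $\Omega_{x,t}(\tau)$ come from the first and second variations of the $\mathcal{L}$-energy (with Perelman's trace-Harnack integral $K = \int_0^\tau\sigma^{3/2}H(\gamma')\,d\sigma$ eliminated between the second-variation bound on $\Delta L$ and the gradient formula), and the distributional versions follow by the Calabi-type barrier construction you describe --- shift the base point forward along a minimizing $\mathcal{L}$-geodesic so that $(y,\tau)$ exits the cut locus, exploit the concatenation inequality to get a smooth upper barrier touching $\ell$ at $(y,\tau)$ with errors of order $\delta$, and integrate by parts against the test function using the Lipschitz regularity and measure-zero cut locus of Theorem \ref{Y}(2)--(3).

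Two minor slips worth flagging. First, the Hamilton--Jacobi identity \eqref{eq_l_1} does not involve $K$ at all: it follows immediately from $\partial_\tau L = \sqrt{\tau}\bigl(R - |\gamma'(\tau)|^2\bigr)$ together with $|\nabla L|^2 = 4\tau|\gamma'(\tau)|^2$; the integral $K$ is needed only when expressing $\Delta L$ and $|\nabla L|^2$ in terms of $L$, and it is eliminated between those two to produce \eqref{eq_l_5}. Second, the algebraic combination you want is $\eqref{eq_l_5} = \eqref{eq_l_1} - 2\cdot\eqref{eq_l_4}$, not twice \eqref{eq_l_1} minus \eqref{eq_l_4}; equivalently, $\eqref{eq_l_4}$ is half of $\eqref{eq_l_1}-\eqref{eq_l_5}$. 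Neither slip affects the soundness of your overall strategy, which matches the cited proofs.
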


The following is a useful consequence of the first variation formula.
	
\begin{Lemma}[Perelman \cite{P1}] \label{grad_ll}
Let $\gamma$ be an $\mathcal{L}$-geodesic starting from $(x,t)$. Then, so long as $t-\tau>\inf I$ and $\gamma(\tau)\in \Omega_{x,t}(\tau)$, it holds that
\begin{eqnarray}
\nabla^{{g_{t-\tau}}} \ell(\gamma(\tau),\tau)=\gamma'(\tau).
\end{eqnarray}
\end{Lemma}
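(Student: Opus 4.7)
The plan is to derive the identity from the first variation formula for the $\mathcal{L}$-energy \eqref{l-energy}. The assumption $\gamma(\tau)\in\Omega_{x,t}(\tau)$ guarantees, via standard $\mathcal{L}$-exponential map arguments (no conjugate points plus uniqueness of the minimizer), that $L_{x,t}(\cdot,\tau)$ is smooth in a neighborhood of $y:=\gamma(\tau)$, so $\nabla^{g_{t-\tau}}\ell(\cdot,\tau)$ is defined classically there.

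First, I would fix an arbitrary $Y\in T_yM$ and choose a smooth one-parameter variation $\gamma_s:[0,\tau]\to M$ of $\gamma=\gamma_0$ with $\gamma_s(0)=x$ and $\frac{\partial}{\partial s}\big|_{s=0}\gamma_s(\tau) = Y$; write $V(\sigma):=\frac{\partial}{\partial s}\big|_{s=0}\gamma_s(\sigma)$, so that $V(0)=0$ and $V(\tau)=Y$. Differentiating $\mathcal{L}(\gamma_s)$ at $s=0$ produces $\int_0^\tau \sqrt{\sigma}\big(\langle \nabla R, V\rangle + 2\langle \nabla_{\gamma'}V,\gamma'\rangle\big)\,d\sigma$. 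I would then integrate the second term by parts in $\sigma$, taking care that $\partial_\sigma g_{t-\sigma} = 2\Ric_{g_{t-\sigma}}$ contributes an extra Ricci term under the integral. After collecting, the bulk integrand becomes $\sqrt{\sigma}\,\big\langle V,\; \nabla R - 2\nabla_{\gamma'}\gamma' - 4\Ric(\gamma',\cdot) - \frac{1}{\sigma}\gamma'\big\rangle_{g_{t-\sigma}}$, which is $-2$ times the left-hand side of the $\mathcal{L}$-geodesic equation \eqref{l-geodesic} and therefore vanishes along $\gamma$. The boundary term at $\sigma=0$ also vanishes, since $V(0)=0$ while $\sqrt{\sigma}\gamma'(\sigma)$ remains bounded by the initial condition $\lim_{\sigma\to 0}\sqrt{\sigma}\gamma'(\sigma)=v$. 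Only the endpoint at $\sigma=\tau$ survives, yielding
$$\frac{d}{ds}\bigg|_{s=0}\mathcal{L}(\gamma_s) = 2\sqrt{\tau}\,\langle Y,\gamma'(\tau)\rangle_{g_{t-\tau}}.$$

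To conclude, I would observe that $L_{x,t}(\gamma_s(\tau),\tau)\le \mathcal{L}(\gamma_s)$ with equality at $s=0$, because $\gamma_0$ is a minimizing $\mathcal{L}$-geodesic. Hence $s\mapsto \mathcal{L}(\gamma_s) - L_{x,t}(\gamma_s(\tau),\tau)$ is nonnegative, smooth in $s$ (by the smoothness of $L_{x,t}(\cdot,\tau)$ near $y$), and attains its minimum $0$ at $s=0$, so its $s$-derivative vanishes there. Combined with the previous display this gives $\langle \nabla^{g_{t-\tau}}L_{x,t}(y,\tau),\,Y\rangle = 2\sqrt{\tau}\langle Y,\gamma'(\tau)\rangle$; since $Y\in T_yM$ is arbitrary, $\nabla^{g_{t-\tau}} L_{x,t}(y,\tau) = 2\sqrt{\tau}\,\gamma'(\tau)$, and dividing by $2\sqrt{\tau}$ yields the stated identity $\nabla^{g_{t-\tau}}\ell(\gamma(\tau),\tau)=\gamma'(\tau)$. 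I do not expect a substantive obstacle: the only delicate point is the $\sqrt{\sigma}$-weighted bookkeeping near $\sigma=0$, where integrability and the cancellation of boundary contributions are automatic thanks to the design of the $\mathcal{L}$-functional.
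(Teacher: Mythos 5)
Your argument is correct and follows the standard first-variation computation that Perelman gives in \cite[\S 7]{P1}; the paper itself quotes this lemma without proof, attributing it to Perelman, and your derivation is exactly the one Perelman (and subsequent expositions such as Morgan--Tian and Ye) carry out. In particular you correctly account for the extra Ricci term coming from $\partial_\sigma g_{t-\sigma} = 2\Ric_{g_{t-\sigma}}$ when integrating by parts, identify the resulting bulk integrand as $-2\sqrt{\sigma}$ times the left-hand side of the $\mathcal{L}$-geodesic equation \eqref{l-geodesic}, justify the vanishing of the boundary term at $\sigma=0$ using $V(0)=0$ and boundedness of $\sqrt{\sigma}\gamma'(\sigma)$, and close the argument by comparing $\mathcal{L}(\gamma_s)$ with $L_{x,t}(\gamma_s(\tau),\tau)$ at the minimizer using the smoothness of $L_{x,t}(\cdot,\tau)$ on $\Omega_{x,t}(\tau)$.
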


The following monotonicity formula is also a well-known result of Perelman.

	\begin{Theorem}[Perelman\cite{P1}, see also Theorem 4.3 and Theorem 4.5 in \cite{Y1}]\label{Monotonicity}
Let $(M^n,g_t)_{t\in I}$ be a Ricci flow such that the Ricci curvature of each time-slice is bounded from below. For any $(x,t)\in M\times I$, the reduced volume $\mathcal {V}_{x,t}(\tau)$ satisfies
 \begin{enumerate}
 \item $\mathcal{V}_{x,t}(\tau)\leq 1$ for all $\tau>0$ with $t-\tau> \inf I$.
 \item $\mathcal{V}_{x,t}(\tau)$ is non-increasing in $\tau$.
 \end{enumerate}
	\end{Theorem}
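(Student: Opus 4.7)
The plan is to derive the monotonicity (2) by applying the distributional inequality \eqref{eq_l_7} with the reduced volume density itself as the test function, and then to deduce the upper bound (1) from (2) via a standard small-$\tau$ asymptotic analysis of $\mathcal{V}_{x,t}$.

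For (2), set $u(\cdot,\tau):=(4\pi\tau)^{-n/2}e^{-\ell(\cdot,\tau)}$, so that $\mathcal{V}_{x,t}(\tau)=\int_M u\,dg_{t-\tau}$. On the regular set $\Omega_{x,t}(\tau)$, I would differentiate under the integral sign, using both the Ricci flow identity $\partial_\tau\,dg_{t-\tau}=R\,dg_{t-\tau}$ and the pointwise equation \eqref{eq_l_1}; a direct computation yields the algebraic identity
$$\partial_\tau u + Ru \;=\; \tfrac{1}{2}\Bigl(R + |\nabla\ell|^2 + \tfrac{\ell-n}{\tau}\Bigr)u.$$
Since Theorem \ref{Y}(3) ensures that $C_{x,t}(\tau)$ has zero Riemannian measure, integrating this gives (formally)
$$\frac{d}{d\tau}\mathcal{V}_{x,t}(\tau) \;=\; \tfrac{1}{2}\int_M\Bigl(R+|\nabla\ell|^2+\tfrac{\ell-n}{\tau}\Bigr)u\,dg_{t-\tau}.$$
The right-hand side is exactly the integrand appearing in the distributional inequality \eqref{eq_l_7} applied with test function $\phi=u$: since $\nabla u=-u\nabla\ell$, one has $-2\langle\nabla\ell,\nabla u\rangle=2|\nabla\ell|^2 u$, so \eqref{eq_l_7} reads precisely $\int_M(R+|\nabla\ell|^2+\tfrac{\ell-n}{\tau})u\,dg_{t-\tau}\le 0$, whence $d\mathcal{V}_{x,t}/d\tau\le 0$.

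For (1), I would establish $\lim_{\tau\to 0^+}\mathcal{V}_{x,t}(\tau)=1$ via the standard Gaussian concentration of $u$: for small $\tau$ the reduced distance satisfies $\ell(y,\tau)\sim d_{g_t}(x,y)^2/(4\tau)$, so $u\,dg_{t-\tau}$ concentrates on $x$ as a Gaussian of variance $2\tau$ in $g_t$ and the integral converges to $1$. Combined with the monotonicity from (2), this forces $\mathcal{V}_{x,t}(\tau)\le 1$ on its whole admissible domain.

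The main technical obstacle is that $u$ is not compactly supported, so it is not directly admissible as a test function in \eqref{eq_l_7}. The fix is a standard cutoff/exhaustion argument: choose $\eta_R$ equal to one on $B_{g_{t-\tau}}(x,R)$ and supported in $B_{g_{t-\tau}}(x,2R)$, apply \eqref{eq_l_7} to $\phi=\eta_R u$, and send $R\to\infty$. The error terms involve $u$, $u|\nabla\ell|$, and $u|\nabla\eta_R|$ over the annulus $B_{g_{t-\tau}}(x,2R)\setminus B_{g_{t-\tau}}(x,R)$; under the Ricci lower bound hypothesis, Ye's estimates \cite{Y1} yield a quadratic lower bound for $\ell$ and a linear upper bound for $|\nabla\ell|$ in the $g_{t-\tau}$-distance from $x$, forcing at least Gaussian spatial decay of $u$, which in turn forces all three error integrals to vanish as $R\to\infty$. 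A parallel cutoff/dominated-convergence argument, combined with the local Lipschitz bound of Theorem \ref{Y}(2), justifies the interchange of $d/d\tau$ and $\int_M$. Carrying out these uniform integrability checks carefully is the main technical content of the proof.
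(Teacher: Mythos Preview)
The paper does not supply its own proof of this theorem: it is recorded in the preliminaries section as a known result, with attribution to Perelman \cite{P1} and to Theorems~4.3 and~4.5 of Ye \cite{Y1}, and no argument is given. So there is no in-paper proof to compare against directly; the relevant comparison is with the cited sources.

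Your sketch is correct and is one of the two standard routes. Perelman's original argument (made rigorous under a Ricci lower bound by Ye) proceeds differently: one pulls the integral back to $T_xM$ via the $\mathcal{L}$-exponential map,
\[
\mathcal{V}_{x,t}(\tau)=\int_{\Omega^{TM}_{x,t}(\tau)}(4\pi\tau)^{-n/2}e^{-\ell(\gamma_v(\tau),\tau)}\,J_v(\tau)\,dv,
\]
and shows by an ODE computation along each $\mathcal{L}$-geodesic that the integrand is nonincreasing in $\tau$, while the domain $\Omega^{TM}_{x,t}(\tau)$ is nested and shrinking. That approach avoids both the cutoff at spatial infinity and the differentiation-under-the-integral issue altogether. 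Your PDE approach---reading off the sign of $\tfrac{d}{d\tau}\mathcal{V}$ from \eqref{eq_l_1} combined with the distributional inequality \eqref{eq_l_7}---is equally valid and perhaps closer in spirit to how the paper itself later exploits the $\ell$-inequalities; the cost is exactly the cutoff and Gaussian-decay justification you outline, which does go through under the Ricci lower bound via Ye's growth estimates for $\ell$ and $|\nabla\ell|$. One small point worth making explicit in your write-up: since $\ell$ is only locally Lipschitz, the interchange of $\tfrac{d}{d\tau}$ with $\int_M$ is cleanest phrased as absolute continuity of $\tau\mapsto u(y,\tau)$ for a.e.\ $y$ (from Theorem~\ref{Y}(2),(3)) plus Fubini, rather than dominated convergence alone. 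The small-$\tau$ asymptotic $\mathcal{V}_{x,t}(\tau)\to 1$ is standard and is handled the same way in both approaches.
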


	\subsection{Bamler's theory of non-collpased limits of Ricci flows}
Bamler's definition of metric flow and $\mathbb{F}$-convergence in their full lengths is beyond our scope of exposition here. We will assume of the reader some familiarity with the contents of \cite{Bam20a, Bam20c, Bam20b}, and will only introduce the results that are most pertinent to our work.

Consider a sequence of $n$-dimensional Ricci flows $(M^i,g^i_t,x_i)_{t\in(-T_i,0]}$, each with bounded curvature within compact time intervals, satisfying
\begin{align}\label{non-collapsing_condition}
	\mathcal{N}_{x_i,0}(\tau)\ge -Y
\end{align}
for some $\tau>0$ and $Y>0$. Denote by $d\nu_{x_i,0\,|\, t}=K(x_i,0\,|\,\cdot,t)\,dg_t$ the conjugate heat kernel based at $(x_i,0)$, viewed as an evolving probability measure. Then, according to \cite[Theorem 7.7]{Bam20b},  the sequence of pairs $\{((M^i,g^i_t)_{t\in (-T_i,0]}, (\nu_{x_i,0\,|\, t})_{t\in(-T_i,0]})\}_{i=1}^\infty$, after passing to a subsequence which we shall not relabel,
converges to a metric flow pair $(\mathcal{X},(\nu_t)_{t<0})$ in the $\mathbb{F}$-sense, namely,
\begin{gather}\label{eq:definition of the limiting sequence}
	((M^i,g^i_t)_{t\in (-T_i,0]}, (\nu_{x_i,0\,|\, t})_{t\in(-T_i,0]})\xrightarrow[\quad i\to\infty\quad ]{\mathbb{F}}(\mathcal{X},(\nu_t)_{t<0}).
\end{gather}
The limit $\mathcal{X}$ is a metric flow over the time interval $(-T_\infty,0)$, where $T_\infty=\limsup_{i\to\infty}T_i$; see \cite[\S3.1, \S5]{Bam20b} for the definitions of metric flow and $\mathbb{F}$-convergence. $\nu_t$ is a conjugate heat flow on $\mathcal{X}$. In particular, it can be viewed as an evolving probability measure (c.f. \cite[Definition 3.13]{Bam20b}).

Bamler showed that $\mathcal{X}$ has the decomposition
$$
\mathcal{X}=\mathcal{R} \cupdot \mathcal{S},
$$
where $\mathcal{R}$ and  $\mathcal{S}$ are called the regular part and the singular part, respectively; the regular part $\mathcal{R}$ is  a smooth Ricci flow space-time with a time-dependent Riemannian metric $g_t$; the singular part is negligible, in the sense that its space-time Minkowsky codimension is at least $4$; see \cite{Bam20c}.

Let $\mathfrak{t}: \mathcal{X}\to (-T_\infty,0)$ be the time function on the metric flow $\mathcal{X}$. Then, subsets of the forms
\begin{align*}
\mathcal{X}_t:=\mathcal{X}\cap\mathfrak{t}^{-1}(t),\quad \mathcal{X}_{[t_1,t_2]}:=\mathcal{X}\cap\mathfrak{t}^{-1}([t_1,t_2]),
\end{align*}
are called time-slices and time-slabs, respectively. We shall recall some important facts about the limit metric flow $\mathcal{X}$.

\begin{Theorem}[{\cite[Theorem 2.4]{Bam20c}, \cite{Bam20b},\cite[Theorem 9.12]{Bam20b}}]\label{Regularpart-basic-properties}
	The following are true:
	\begin{enumerate}[(a)]
	\item For any $t\in(-T_\infty,0)$, $\mathcal{S}_t:=\mathcal{S} \cap \mathcal{X}_t$ is a set of measure zero in the sense that $\nu_t(\mathcal{S}_t)=0$.
	\item For any $t\in(-T_\infty,0)$, the time-slice $\mathcal{X}_t$ is a metric space arising as a metric completion of the length metric on $\left(\mathcal{R}_t, g_t\right)$. In other words, $\mathcal{R}_t:=\mathcal{R} \cap \mathcal{X}_t \subset \mathcal{X}_t$ is open and dense, and the metric $d_t$ of $\mathcal{X}_t$, when restricted to $\mathcal{R}_t$, agrees with the length metric of $g_t$.\\
	\item $\mathcal{X}$ is metric flow of full support in the sense of \cite[Definition 3.20]{Bam20b}. In particular, for any $x\in\mathcal{X}$ and $r>0$, let $t=\mathfrak{t}(x)$, we have
	\begin{align*}
	\nu_t(B_t(x,r))>0.
	\end{align*}
	 \item On the regular part $\mathcal{R}$, we have
\begin{align*}
d\nu_t= (4\pi|t|)^{-\frac{n}{2}}e^{-f}\,dg_t,
\end{align*} 
where $(4\pi|t|)^{-\frac{n}{2}}e^{-f}$ is a positive solution to the conjugate heat equation $(-\partial_{\mathfrak{t}}-\Delta_{g_t}+R_{g_t})u=0$ on $\mathcal{R}$.
	\end{enumerate}	
	
\end{Theorem}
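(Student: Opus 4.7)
The plan is to treat (a)--(d) as a package of structural consequences of Bamler's partial regularity theory for $\mathbb{F}$-limits of non-collapsed Ricci flows, organized around the regular/singular decomposition $\mathcal{X}=\mathcal{R}\cupdot\mathcal{S}$ defined pointwise through tangent flows: a point lies in $\mathcal{R}$ if and only if every tangent flow at that point is the static Euclidean Ricci flow on $\mathbb{R}^n$ equipped with the Gaussian conjugate heat flow. The first (and by far the hardest) step is to establish this stratification together with two quantitative outputs: that $\mathcal{S}\subset\mathcal{X}$ has parabolic Minkowski codimension at least $4$, and that $\mathcal{R}$ carries a smooth Ricci-flow space-time structure $g_t$ with respect to which the approximating sequence $(M^i,g^i_t,x_i)$ converges smoothly on compact subsets of $\mathcal{R}$ in the pointed Cheeger--Gromov sense. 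Items (a)--(d) then follow as corollaries of this stratification, combined with the heat-kernel convergence already present in \cite{Bam20b}.

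Granting the stratification, item (d) is close to a definition: on the open set $\mathcal{R}$ the conjugate heat kernels $K(x_i,0\,|\,\cdot,t)$ converge locally smoothly to a positive solution $u$ of the conjugate heat equation; writing $u=(4\pi|t|)^{-n/2}e^{-f}$ identifies the density of $\nu_t$ against $dg_t$ on the regular part. For (a) I would combine the codimension estimate on $\mathcal{S}$ with the Gaussian upper bounds on the conjugate heat kernels (uniform across the sequence) to obtain uniform integrability, and then pass to the limit in $\int_{A^i_t} K(x_i,0\,|\,\cdot,t)\,dg^i_t\to 0$, where $A^i_t$ is a tubular neighborhood of the almost-singular region in $M^i$, concluding $\nu_t(\mathcal{S}_t)=0$.

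For (b), openness of $\mathcal{R}_t$ in $\mathcal{X}_t$ is immediate from the tangent-flow definition; density follows from (a) together with (c); and the identification of the length metric of $g_t$ on $\mathcal{R}_t$ with $d_t|_{\mathcal{R}_t}$ is obtained by approximating minimizing curves in $(\mathcal{X}_t,d_t)$ by smooth curves avoiding $\mathcal{S}_t$ (the space-time codimension-$\ge 4$ bound yields a time-slice codimension at least $2$, so generic curves miss the singular set) and applying lower semicontinuity of length under $\mathbb{F}$-convergence. For (c), the hypothesis \eqref{non-collapsing_condition} together with monotonicity of the Nash entropy yields a uniform lower bound on the pointed Nash entropy at every base point $x\in\mathcal{X}_t$; the standard on-diagonal lower bound for the conjugate heat kernel then passes to the limit and forces $\nu_t(B_t(x,r))>0$ for each $r>0$.

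The main obstacle is of course the partial regularity theorem itself. Establishing that $\mathcal{S}$ has Minkowski codimension at least $4$ and that $\mathcal{R}$ carries a smooth Ricci-flow structure compatible with the $\mathbb{F}$-limit requires a full quantitative stratification in the parabolic setting: one uses the $\epsilon$-regularity theorem for the Nash entropy from \cite{Bam20a} (smooth points are characterized by the pointed Nash entropy being close to $0$ at all scales) together with cone-splitting, volumetric covering, and Reifenberg-type arguments adapted to Ricci flow. This is the technical heart of \cite{Bam20c}; once it is in place, the four items (a)--(d) reduce to bookkeeping involving smooth convergence of conjugate heat kernels and elementary measure theory on metric flows.
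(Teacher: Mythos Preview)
The paper does not prove this theorem at all: it is stated in the preliminaries section as a direct quotation of results from Bamler's work (\cite[Theorem~2.4]{Bam20c}, \cite{Bam20b}, \cite[Theorem~9.12]{Bam20b}), with no argument supplied. There is therefore no ``paper's own proof'' to compare your proposal against.

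Your sketch is a reasonable high-level summary of how these facts are obtained in Bamler's partial regularity theory, and the dependencies you identify (quantitative stratification, codimension-$4$ estimate for $\mathcal{S}$, smooth convergence of conjugate heat kernels on $\mathcal{R}$, Nash-entropy $\epsilon$-regularity) are the right ones. But for the purposes of this paper the statement is purely a citation, and a proof proposal is not called for; the appropriate response here is simply to refer the reader to the cited sources.
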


It is also crucial to our proof that the convergence in \eqref{eq:definition of the limiting sequence} can be updated to smooth convergence on $\mathcal{R}$. 

\begin{Theorem}[{\cite[Theorem 9.21]{Bam20b}, \cite[Theorem 2.5]{Bam20c}}]\label{Thm:local uniform convergence}
There is an increasing sequence of open sets $U_1\subset U_2\subset \hdots \subset \mathcal{R}$ with $\displaystyle \cup_{i=1}^\infty U_i=\mathcal{R}$, open sets $V_i\subset M^i\times(-T_i,0)$, time-preserving diffeomorphisms $\psi_i:U_i\to V_i$, and a sequence of positive numbers $\delta_i\searrow 0$, such that
\begin{align*}
\left\|\psi_i^*g^i-g\right\|_{C^{[\delta_i^{-1}]}(U_i)} &\ <\delta_i,
\\
\left\|\psi_i^*\partial_{\mathfrak{t},i}-\partial_{\mathfrak{t}}\right\|_{C^{[\delta_i^{-1}]}(U_i)} &\ <\delta_i,
\\
\left\|K(x_i,0\,|\,\cdot, t)\circ\psi_i- (4\pi|t|)^{-\frac{n}{2}}e^{-f}\right\|_{C^{[\delta_i^{-1}]}(U_i)} &\ <\delta_i,
\end{align*}
where $K$ is the conjugate heat kernel and $d\nu_t= (4\pi|t|)^{-\frac{n}{2}}e^{-f}\,dg_t$ on $\mathcal{R}$.
\end{Theorem}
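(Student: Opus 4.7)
The plan is to derive the claim from three ingredients: Bamler's $\epsilon$-regularity for the Nash entropy (Theorem 10.2 of \cite{Bam20a}), Hamilton's smooth compactness for Ricci flows with uniform parabolic curvature bounds, and the uniqueness of $\mathbb{F}$-limits combined with Theorem \ref{Regularpart-basic-properties}. By definition the regular set $\mathcal{R}$ admits the quantitative characterization that $x\in\mathcal{R}$ if and only if, for every $\mathbb{F}$-approximating sequence $(x_i,t_i)\in M^i\times(-T_i,0)$ converging to $x$, the Nash entropy $\mathcal{N}_{x_i,t_i}(r^2)$ is arbitrarily close to $0$ at some fixed scale $r>0$ independent of $i$. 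Bamler's $\epsilon$-regularity then delivers the curvature radius bound $r_{\Rm}(x_i,t_i)\ge c\,r$ for a dimensional constant $c>0$, hence $|\Rm_{g^i}|\le Cr^{-2}$ on the parabolic neighborhood $P_{cr}(x_i,t_i)\subset M^i\times(-T_i,0)$.

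First, I would upgrade this pointwise control to a uniform curvature bound on a thickening of an arbitrary relatively compact open set $\Omega\Subset\mathcal{R}$. Using a finite cover of $\Omega$ by quantitatively regular parabolic balls together with the heat-kernel concentration and $H_n$-center machinery of \cite[\S8]{Bam20b} to transport Nash entropy smallness between nearby base points in $M^i$, one obtains a uniform bound $|\Rm_{g^i}|\le C(\Omega)$ on a parabolic thickening of the $\mathbb{F}$-preimage of $\Omega$, together with uniform positive injectivity radius bounds from Bamler–Perelman non-collapsing. Hamilton's compactness theorem then produces, after passing to a subsequence, time-preserving diffeomorphisms $\psi_i^\Omega:\Omega\to V_i^\Omega\subset M^i\times(-T_i,0)$ with $\|(\psi_i^\Omega)^*g^i-g\|_{C^k(\Omega)}\to 0$ and $\|(\psi_i^\Omega)^*\partial_{\mathfrak{t},i}-\partial_{\mathfrak{t}}\|_{C^k(\Omega)}\to 0$ for every $k$. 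The uniqueness of $\mathbb{F}$-limits (Theorem 6.45 of \cite{Bam20b}) forces this smooth Hamilton limit to agree isometrically with $(\mathcal{R},g_t)|_\Omega$, so no independent identification argument is required.

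Second, I would carry out a diagonal argument along a countable exhaustion $\mathcal{R}=\bigcup_{j\ge 1}\Omega_j$ by nested precompact opens: for each $j$, pass to a subsequence on which the smooth convergence above holds on $\Omega_j$ with $C^j$-error at most $1/j$, extract a diagonal subsequence (not relabeled, matching the one fixed in the statement), and set $U_j:=\Omega_j$, $\psi_j:=\psi_j^{\Omega_j}$, $V_j:=\psi_j(U_j)$, and $\delta_j:=1/j$. For the conjugate heat kernel, the uniform curvature bounds on $V_i$ permit standard parabolic Schauder estimates applied to $K(x_i,0\,|\,\cdot,t)$, so any $C^\infty_{\mathrm{loc}}$ subsequential limit of $K(x_i,0\,|\,\cdot,t)\circ\psi_i$ solves the conjugate heat equation on $\mathcal{R}$; the measure-valued $\mathbb{F}$-convergence $\nu_{x_i,0\,|\,t}\rightharpoonup\nu_t$ fixes this limit uniquely as $(4\pi|t|)^{-n/2}e^{-f}$ on $\mathcal{R}$, and one more diagonalization delivers the stated $C^{[\delta_i^{-1}]}$-estimate for the pulled-back kernels.

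The main obstacle is the first step: propagating the single-point $\epsilon$-regularity curvature bound into a uniform bound on a neighborhood of all of $\Omega$ while simultaneously verifying that every point of $\Omega$ is quantitatively regular in the sense that permits Hamilton compactness. This requires the full weight of Bamler's partial regularity theory from \cite{Bam20c}, in particular the Minkowski codimension-$4$ estimate for the singular part $\mathcal{S}$ (ensuring that $\mathcal{R}$ is not ``thinly riddled'' by singularities and that covers by regular parabolic balls exist), the $H_n$-center-based comparison of Nash entropy at distinct base points, and non-collapsing on curvature scales. Once these tools are granted, Hamilton's compactness, interior parabolic regularity, and the diagonal argument bring the proof to its conclusion.
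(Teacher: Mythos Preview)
The paper does not prove this theorem at all; it is quoted verbatim from Bamler's work (Theorem 9.21 of \cite{Bam20b} and Theorem 2.5 of \cite{Bam20c}) and used as a black-box preliminary tool in Section 2.3. There is therefore no ``paper's own proof'' to compare against---your proposal is an attempt to reconstruct Bamler's argument itself, not anything the authors of this article contribute.

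As a sketch of Bamler's actual proof, your outline is broadly in the right spirit but somewhat circular in one place: you invoke Bamler's $\epsilon$-regularity theorem \cite[Theorem 10.2]{Bam20a} and the codimension-4 estimate for $\mathcal{S}$ from \cite{Bam20c}, but the smooth local convergence on $\mathcal{R}$ (the very statement you are trying to prove) is established in \cite{Bam20b} \emph{before} the partial regularity theory of \cite{Bam20c} is developed, and is in fact one of the inputs to the latter. Bamler's route in \cite{Bam20b} instead characterizes regular points intrinsically via heat-kernel almost-selfsimilarity and uses the two-sided pseudolocality and $P^*$-parabolic-neighborhood machinery to obtain uniform curvature bounds directly, without appealing to the singular-set estimates. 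Your identification of the Hamilton limit with the $\mathbb{F}$-limit via uniqueness is also more delicate than you indicate: one must check compatibility of the diffeomorphisms across overlapping charts and match the conjugate heat flows, which Bamler handles through the correspondence framework of \cite[\S 6]{Bam20b}. If your goal is simply to use this theorem in the present paper, no proof is needed---just cite it as the authors do.
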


\section{local estimates for the reduced distance}

	\begin{Proposition}\label{l-estiamtes}
If
\begin{align*}
r>0, \quad L< \infty,\quad T<\infty, \quad C\ge \underline{C}(r,L,T),
\end{align*}	
	then the following holds.
	
		Let $(M^n,g_t)_{t\in I}$ be a complete Ricci flow with bounded curvature within each compact time interval and let $\ell:=\tl_{x_0,t_0}$ be  (the forward-time version of) the reduced distance based at a fixed space-time point $(x_0,t_0)\in M\times I$; see \eqref{eq:def of tl}. Assume that for a space-time point $(x,t)\in M\times I$, it holds that
		\begin{enumerate}[(a)]
		\item $\displaystyle [t-r^2,t]\subset I$ and $0< t_0-t \le T$;
		\item $\displaystyle |\Rm| \le r^{-2}$ on $\displaystyle B_{g_t}(x,r)\times[t-r^2,t]$;
		\item $\ell(x,t) \le L$.
		\end{enumerate}
Then we have 
\begin{enumerate}
\item $|\ell|\le C$ on $B_{g_t}(x,r)\times [t-r^2/2,t-r^2/8]$;
\item $\displaystyle\left|\tfrac{\partial}{\partial s}\ell\right|+|\nabla\ell|\le C$ on $B_{g_t}(x,r/2)\times [t-r^2/2,t-r^2/4]$.
\end{enumerate}		
	\end{Proposition}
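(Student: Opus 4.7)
The plan is to split the proposition into an $L^\infty$ bound on $\ell$ over the larger cylinder (conclusion (1)) and, once this is in hand, first-order bounds on the smaller cylinder (conclusion (2)) coming from standard parabolic regularity. I handle the two directions of the $L^\infty$ bound separately. The upper bound is the easy one: set $\tau_0:=t_0-t\in(0,T]$ and pick a minimizing $\mathcal{L}$-geodesic $\gamma_0:[0,\tau_0]\to M$ from $(x_0,t_0)$ to $(x,t)$, which exists by Theorem \ref{Y}\,(1) and satisfies $\mathcal{L}(\gamma_0)\le 2\sqrt{T}\,L$ by hypothesis (c). For each target point $(y,s)$ set $\tau_1:=t_0-s\in[\tau_0+r^2/8,\tau_0+r^2/2]$, and concatenate $\gamma_0$ with an auxiliary curve $\gamma_1:[\tau_0,\tau_1]\to B_{g_t}(x,r)$ moving from $x$ to $y$ along a minimizing $g_t$-geodesic (which stays inside $B_{g_t}(x,r)$ because its length is less than $r$) at constant speed. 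Since $|\mathrm{Rm}|\le r^{-2}$ on $B_{g_t}(x,r)\times[t-r^2,t]$, the metrics $g_{t_0-\sigma}$ there are uniformly comparable to $g_t$ and $|R|\le Cr^{-2}$; hence the added $\mathcal{L}$-energy is at most $C(r,T)$, and dividing by $2\sqrt{\tau_1}\ge 2\sqrt{r^2/8}$ yields $\ell(y,s)\le C(r,L,T)$.

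The lower bound, which is the crux, I plan to extract from a sub-solution argument for $u:=(4\pi\tau)^{-n/2}e^{-\ell}$. A direct distributional calculation --- inserting the test function $u\phi$ (with $\phi\ge 0$ Lipschitz and compactly supported) into the weak inequality \eqref{eq_l_6}, using the almost-everywhere identity $\nabla u=-u\nabla\ell$, and accounting for the evolution $\partial_\tau dg_{t_0-\tau}=R\,dg_{t_0-\tau}$ of the volume form --- converts Perelman's inequality \eqref{eq_l_4} into the distributional subsolution property $\partial_\tau u-\Delta u+Ru\le 0$, with the $R u\phi$ contributions cancelling neatly in the end. Reduced-volume monotonicity (Theorem \ref{Monotonicity}) then supplies the uniform $L^1$ control $\int_M u(\cdot,\tau)\,dg_{t_0-\tau}\le 1$. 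Invoking the Moser/De Giorgi parabolic mean-value inequality for non-negative sub-solutions on cylinders of the form $B_{g_s}(y,r')\times[s,s+r'^2]$ with $r'\ll r$ chosen so that the cylinder sits inside the curvature-controlled region $B_{g_t}(x,r)\times[t-r^2,t]$ --- the bounded-curvature hypothesis gives uniform ellipticity constants at scale $r$, and a short cover/limit argument absorbs the mismatch near the boundary of the spatial ball --- produces $u\le C(r,T)$ on $B_{g_t}(x,r)\times[t-r^2/2,t-r^2/8]$. Since $r^2/8\le\tau_1\le T+r^2/2$, inverting $u=(4\pi\tau)^{-n/2}e^{-\ell}$ gives $\ell\ge -C(r,T)$.

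With $|\ell|\le C$ on the larger cylinder, $u$ is uniformly bounded above and bounded away from zero there, so standard interior parabolic Schauder-type regularity for sub-solutions, combined with Perelman's identity \eqref{eq_l_1} which expresses $|\nabla\ell|^2=-2\partial_\tau\ell+R-\ell/\tau$ algebraically, yields $|\nabla\ell|+|\partial_s\ell|\le C(r,L,T)$ on $B_{g_t}(x,r/2)\times[t-r^2/2,t-r^2/4]$. The main technical obstacle throughout is that $\ell$ is only locally Lipschitz, with a measure-zero $\mathcal{L}$-cut-locus (Theorem \ref{Y}\,(3)) across which \eqref{eq_l_4} and \eqref{eq_l_5} hold only in their weak forms \eqref{eq_l_6} and \eqref{eq_l_7}; every maximum-principle and mean-value step must therefore be set up at the level of distributions, with careful bookkeeping for the evolving volume form $dg_{t-\tau}$, and this is where the genuine work lies.
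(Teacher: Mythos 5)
Your upper bound for $\ell$ (the concatenation of a minimal $\mathcal L$-geodesic with a $g_t$-geodesic) is exactly the paper's argument, and your remarks about the distributional framework are appropriate. However, the other two pieces of your plan diverge from the paper, and the one for conclusion (2) has a genuine gap.

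For the lower bound on $\ell$, you are over-engineering. You propose to show $u=(4\pi\tau)^{-n/2}e^{-\ell}$ is a distributional subsolution of the conjugate heat equation, feed in the $L^1$ bound $\int_M u\,dg\le 1$ from reduced volume monotonicity, and run De Giorgi--Moser on cylinders inside the curvature-controlled region. Even setting aside the technical issue you acknowledge but do not resolve (a forward parabolic cylinder based at a point near $\partial B_{g_t}(x,r)$ exits the region where $|\Rm|\le r^{-2}$, so the mean-value inequality is not directly applicable there, and a ``cover/limit'' does not obviously fix this without shrinking the ball in the conclusion), this is far more machinery than needed. The paper simply observes that, because the flow is complete with bounded curvature and exists on $[t-r^2,t_0]$, the scalar maximum principle gives $R\ge -n/r^2$ on \emph{all of} $M\times[t-r^2/2,t_0]$, and then plugs this directly into the definition $\ell=\tfrac{1}{2\sqrt{t_0-s}}\inf_\gamma\int\sqrt\tau(|\gamma'|^2+R)$ to get $\ell\ge -C(r,T)$, with no PDE at all.

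The real gap is conclusion (2). You claim that ``standard interior parabolic Schauder-type regularity for sub-solutions,'' combined with \eqref{eq_l_1}, yields $|\nabla\ell|+|\partial_s\ell|\le C$. There is no such regularity theory: a distributional subsolution $\partial_\tau u-\Delta u+Ru\le 0$ carries no gradient information whatsoever (Schauder/Bernstein gradient estimates apply to solutions, and De Giorgi--Nash--Moser gives at best Hölder bounds on subsolutions, not $C^1$). Invoking \eqref{eq_l_1} does not close the loop either, since it only relates $|\nabla\ell|^2$ and $\partial_\tau\ell$ to each other; to bound either one you need an independent handle on one of them. The paper's actual mechanism is the $\mathcal L$-geodesic structure: by Lemma \ref{grad_ll}, $\nabla\ell(y,s)=\gamma'(t_0-s)$ for the minimizing $\mathcal L$-geodesic $\gamma$ to $(y,s)$; the claim in the paper's proof shows (via the $\mathcal L$-energy bound from part (1)) that $\gamma$ spends a definite $\sqrt\tau$-interval of length $\epsilon(r,L,T)$ inside $B_{g_t}(x,3r/4)$, where $|\Rm|\le r^{-2}$; the reparametrized $\mathcal L$-geodesic equation then gives a Gronwall estimate comparing $|\beta'(\sigma)|$ along that interval to the terminal speed $|\beta'(\sqrt{t_0-s})|$; and the $\mathcal L$-energy bound from part (1) turns this into an upper bound on the terminal speed, hence on $|\nabla\ell|$, after which \eqref{eq_l_1} does give $|\partial_s\ell|$. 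This geodesic argument is not a generic parabolic estimate, and your proposal has no substitute for it.
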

	
\emph{Remark.}  We shall prove that conclusion (2) of the proposition holds on $B_{g_t}(x,r/2)\times [t-r^2/2,t-r^2/4] \cap (\cup_{\tau>0}\Omega_{x_0,t_0}(\tau)\times\{t_0-\tau\})$. However, by the local Lipschitz property of $\ell$ and its absolute continuity, this is sufficient to entail (2) as a local $C^{0,1}$ estimate.

\begin{proof}
(1) First of all, the lower bound of $\ell$ follows easily from the lower bound of the scalar curvature. By the maximum principle, we have
\begin{align*}
R\ge -\frac{n}{r^2}\quad \text{ on }\quad M \times [t-r^2/2,t_0].
\end{align*}
Thus, for any $(y,s)\in M\times [t-r^2/2,t-r^2/8]$, we have
\begin{align}\label{eq:lower C0 of ll}
\ell(y,s) = &\ \frac{1}{2\sqrt{t_0-s}}\inf_\gamma \int_0^{t_0-s}\sqrt{\tau}\big(|\gamma'(\tau)|^2_{g_{t_0-\tau}}+R_{g_{t_0-\tau}}(\gamma(\tau))\big)\,d\tau
\\\nonumber
\ge &\ -\frac{1}{2\sqrt{t_0-s}}\int_0^{t_0-s}\sqrt{\tau}\cdot \frac{n}{r^2}\,d\tau
\\\nonumber
\ge &\ -C(r, T).
\end{align}

For the upper bound, we consider an arbitrary point $(y,s)\in B_{g_t}(x,r)\times [t-r^2/2,t-r^2/8]$. Let $\gamma:[0,t_0-s]\to M$ be the concatenation of two curves:
\begin{align*}
\gamma(\tau)= \left\{\begin{array}{rl}
\zeta_1(\tau) & \tau \in [0,t_0-t]
\\
\zeta_2(\tau) & \tau \in [t_0-t,t_0-s]
\end{array}\right.,
\end{align*}
where $\zeta_1$ is a minimal $\mathcal{L}$-geodesic from $(x_0,t_0)$ to $(x,t)$, and $\zeta_2$ is a minimizing geodesic with respect to $g_t$ connecting $x$ and $y$. Note that $\mathcal{L}(\zeta_1) \le 2\sqrt{t_0-t}L$ and $\zeta_2\subset B_{g_t}(x,r)$, and that $|\zeta_2'(\tau)|_{g_t}=\frac{d_{g_t}(x,y)}{t-s}\le 8r^{-1}$. Thus $\gamma$ is a piecewise $C^1$-curve connecting $(x_0,t_0)$ and $(y,s)$, and we may estimate by the curvature assumption in (b):
\begin{align*}
\mathcal{L}(\gamma)=&\ \int_0^{t_0-s}\sqrt{\tau}\big(|\gamma'(\tau)|^2_{g_{t_0-\tau}}+R_{g_{t_0-\tau}}(\gamma(\tau))\big)\,d\tau
\\
= &\ \mathcal{L}(\zeta_1) + \int_{t_0-t}^{t_0-s}\sqrt{\tau}\big(|\zeta_2'(\tau)|^2_{g_{t_0-\tau}}+R_{g_{t_0-\tau}}(\zeta_2(\tau))\big)\,d\tau
\\
\le &\ C(r,L,T) + \int_{t_0-t}^{t_0-s}\sqrt{\tau}|\zeta_2'(\tau)|^2_{g_{t_0-\tau}} \, d\tau
\\
\le &\ C(r,L,T) + \exp\left(\tfrac{C(n)}{r^2}\cdot r^2\right)\int_{t_0-t}^{t_0-s}\sqrt{\tau}|\zeta_2'(\tau)|^2_{g_t} \, d\tau
\\
\le &\ C(r,L,T).
\end{align*}
Thus, part (1) of the proposition is proved.
\\

(2) Let us fix any point $(y,s)\in B_{g_t}(x,r/2)\times [t-r^2/2,t-r^2/4]$ and let $\gamma: [0,t_0-s]\to M$ be the minimizing geodesic from $(x_0,t_0)$ to $(y,s)$. We also assume, without loss of generality, that $(y,s)$ is not in the $\mathcal{L}$-cut-locus of $(x_0,t_0)$. Define
\begin{align*}
\tau_e=\inf\big\{\tau\in[t_0-t+r^2/8,t_0-s]\,\Big|\, \gamma|_{[\tau,t_0-s]}\subset B_{g_t}(x,3r/4)\big\}.
\end{align*}
Then, similar to \cite{CZ}, we have
\\

\noindent\underline{\textbf{Claim.}} \emph{There is a positive number $\epsilon=\epsilon(r,L,T)>0$, such that $\sqrt{t_0-s}-\sqrt{\tau_e}>\epsilon$.}
\\

\begin{proof}[Proof of the claim]
Assume that $t_0-\tau_e<t-r^2/8$, for otherwise there is nothing to prove. Then for $z:=\gamma(\tau_e)$, we must have $d_{g_t}(x,z)=3r/4$. 

Let us perform a change of variable and consider $\beta(\sigma)=\gamma(\sigma^2)$. Then, by part (1) we have
\begin{align*}
\mathcal{L}(\gamma)=\int_0^{\sqrt{t_0-s}}\left(\tfrac{1}{2}|\beta'(\sigma)|^2_{g_{t_0-\sigma^2}}+2\sigma^2R_{g_{t_0-\sigma^2}}(\beta(\sigma))\right)\,d\sigma \le C(r, L,T).
\end{align*}
Consequently, we have
\begin{align*}
\frac{1}{2}\int_{\sqrt{\tau_e}}^{\sqrt{t_0-s}}|\beta'(\sigma)|^2_{g_{t_0-\sigma^2}}\,d\sigma =&\ \mathcal{L}(\gamma)-\mathcal{L}(\gamma|_{[0,\tau_e]}) - \int_{\sqrt{\tau_e}}^{\sqrt{t_0-s}}2\sigma^2R_{g_{t_0-\sigma^2}}(\beta(\sigma))\,d\sigma
\\
\le &\ C(r,L,T),
\end{align*}
where we have applied the curvature assumption (b) and the conclusion of part (1). Arguing in the same way as the proof of part (1), we have
\begin{align*}
\int_{\sqrt{\tau_e}}^{\sqrt{t_0-s}}|\beta'(\sigma)|^2_{g_t}\,d\sigma \le \exp\left(\tfrac{C(n)}{r^2}\cdot r^2\right)\int_{\sqrt{\tau_e}}^{\sqrt{t_0-s}}|\beta'(\sigma)|^2_{g_{t_0-\sigma^2}}\,d\sigma \le C(r,L,T).
\end{align*}

Finally, since $\beta(\sqrt{t_0-s})=y\in B_{g_t}(x,r/2)$, and $\beta(\sqrt{\tau_e})=z$ satisfies $d_{g_t}(x,z)=3r/4$, we have, by the first variation of the geodesic energy:
\begin{align*}
\frac{r^2}{16(\sqrt{t_0-s}-\sqrt{\tau_e})}\le \frac{d^2_{g_t}(y,z)}{\sqrt{t_0-s}-\sqrt{\tau_e}} \le \int_{\sqrt{\tau_e}}^{\sqrt{t_0-s}}|\beta'(\sigma)|^2_{g_t}\,d\sigma  \le C(r,L,T).
\end{align*}
This finishes the proof of the claim.
\end{proof}

Continuing with the proof of part (2), we still consider $\beta(\sigma)=\gamma(\sigma^2)$, the minimizing geodesic connecting $(x_0,t_0)$ and $(y,s)$, then the $\mathcal{L}$-geodesic equation \eqref{l-geodesic} becomes
\begin{equation*}
		\nabla^{g_{t-\sigma^2}}_{\beta'(\sigma)} \beta'(\sigma)-2\sigma^2\nabla^{g_{t-\sigma^2}} R_{g_{t-\sigma^2}}+4\sigma \Ric_{g_{t-\sigma^2}}(\beta'(\sigma))=0,
		\end{equation*}
and so we have
\begin{eqnarray*}
		\frac{d}{d\sigma}|\beta'(\sigma)|_{g_{t_0-\sigma^2}}^2&=&4\sigma^2 \langle\nabla R, \beta'(\sigma)\rangle_{g_{t_0-\sigma^2}}-4\sigma \Ric_{g_{t_0-\sigma^2}}(\beta'(\sigma),\beta'(\sigma))
\\
&\leq& C(r,T)\left(|\beta'(\sigma)|_{g_{t_0-\sigma^2}}+|\beta'(\sigma)|_{g_{t_0-\sigma^2}}^2\right)
		\end{eqnarray*}
for all $\displaystyle\sigma\in\left[\sqrt{\tau_e},\sqrt{t_0-s}\right]$, where we have applied the curvature assumption (b) and Shi's estimate \cite{Sh}. Note that $t-r^2/2\le s<t_0-\tau_e \le t-r^2/8$, and $\beta|_{[\sqrt{\tau_e},\sqrt{t_0-s}]}\subset B_t(x,3r/4)$. Integrating the above inequality, we have

\begin{align*}
|\beta'(\sigma)|^2_{g_{t-\sigma^2}}\ge c(r,T)\left|\beta'(\sqrt{t_0-s})\right|^2_{g_s}-C(r,T)
\end{align*}
for all $\sigma\in[\sqrt{\tau_e},\sqrt{t_0-s}]$.

On the other hand, by (1) and the claim, we have 
\begin{align*}
C(r,L,T) \ge &\ \mathcal{L}(\gamma) -\mathcal{L} (\gamma|_{[0,\tau_e]}) 
\\
=&\ \int_{\sqrt{\tau_e}}^{\sqrt{t_0-s}}\left(\frac{1}{2}|\beta'(\sigma)|^2_{g_{t_0-\sigma^2}} +2\sigma^2 R_{g_{t_0-\sigma^2}}(\beta(\sigma))\right)\,d\sigma
\\
\ge &\ c(r,T)\epsilon\left|\beta'(\sqrt{t_0-s})\right|^2_{g_s}-C(r,T),
\end{align*}
where $\epsilon=\epsilon(r,L,T)>0$ is from the claim. Thus, by Lemma \ref{grad_ll}, we have
\begin{align*}
|\nabla\ell|(y,s)=|\gamma'(t_0-s)|=\frac{|\beta'(\sqrt{t_0-s})|}{2\sqrt{t_0-s}} \le C(r,L,T).
\end{align*}

Finally, by \eqref{eq_l_1}, the curvature assumption (b), and part (1), we also have
\begin{align*}
\left|\frac{\partial}{\partial s}\ell\right|(y,s)\le C(r,L,T).
\end{align*}
This finishes the proof of part (2).
\end{proof}

	\begin{Corollary}\label{Coro:local estimates}
	If
\begin{align*}
\epsilon<\overline{\epsilon}(n),\quad r>0, \quad L< \infty,\quad T>r^2, \quad C\ge \underline{C}(r,L,T),
\end{align*}	
	then the following holds.
	
		Let $(M^n,g_t)_{t\in I}$ be a complete Ricci flow with bounded curvature within each compact time interval and let $\ell:=\tl_{x_0,t_0}$ be (the forward-time version of) the reduced distance based at a fixed space-time point $(x_0,t_0)\in M\times I$; see \eqref{eq:def of tl}. Assume that for a space-time point $(x,t)\in M\times I$, it holds that
		\begin{enumerate}[(a)]
		\item $[t-r^2,t+r^2]\subset I$ and $ r^2<  t_0-t \le T$;
		\item $\displaystyle |\Rm| \le r^{-2}$ on $B_{g_t}(x,r)\times[t-r^2,t+r^2]$;
		\item there is a point $y \in B_{g_{t+r^2}}(x,\epsilon r)$, such that $\ell(y,t+r^2)\le L$.
		\end{enumerate}
Then we have 
\begin{align*}
|\ell|\le C,\quad \left|\tfrac{\partial}{\partial s}\ell\right|+|\nabla\ell|\le C \qquad \text{ on }\qquad B_{g_t}(x,\epsilon r)\times[t-(\epsilon r)^2,t+(\epsilon r)^2].
\end{align*}
	\end{Corollary}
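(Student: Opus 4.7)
The plan is to iterate Proposition \ref{l-estiamtes} itself. The key observation is that, since $y\in B_{g_{t+r^2}}(x,\epsilon r)$ with $\epsilon$ below a dimensional threshold $\bar\epsilon(n)$, the point $x$ itself lies in the spatial ball produced by the first application of Proposition \ref{l-estiamtes} at $(y,t+r^2)$. One can therefore recenter all subsequent applications at $x$, propagating the $\ell$-bound backward in time through finitely many iterations at scale comparable to $r$, and then apply the Proposition one final time at the much smaller scale $\sim \epsilon r$ to obtain the desired estimates on the target parabolic box.

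I would first fix a dimensional constant $c=c(n)\in(0,1)$ so that, by Ricci flow distance distortion under the bound $|\Ric|\le (n-1)r^{-2}$ coming from (b), the inclusion $B_{g_{t'}}(x,cr)\subset B_{g_t}(x,r)$ holds for every $t'\in[t-r^2,t+r^2]$. Then for every $t'\in[t,t+r^2]$, an application of Proposition \ref{l-estiamtes} with base $(x,t')$ and scale $cr$ satisfies all of its hypotheses (in particular $|\Rm|\le r^{-2}\le (cr)^{-2}$), and its conclusion gives an $\ell$-bound on $B_{g_{t'}}(x,cr)\times[t'-(cr)^2/2,\,t'-(cr)^2/8]$.

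Since $\epsilon\le c$, the assumption $d_{g_{t+r^2}}(x,y)\le \epsilon r$ yields $x\in B_{g_{t+r^2}}(y,cr)$, so the first application of Proposition \ref{l-estiamtes} at $(y,t+r^2)$ produces a bound $L_1$ for $\ell(x,t_1)$ at some $t_1$ in the resulting conclusion interval. Iterating with base $(x,t_i)$, each step advances the time backward by $\Theta((cr)^2)$, so after $N=N(n)$ iterations I obtain an $\ell$-bound $L_N=L_N(n,r,L,T)$ at $(x,t_*)$ with $t_*=t+5(\epsilon r)^2$ (say). In the final step I would apply Proposition \ref{l-estiamtes} at $(x,t_*)$ with scale $\rho=4\epsilon r$; since $\rho\le r$ for $\epsilon\le \bar\epsilon(n)$, the curvature hypothesis is automatic, and a short computation (using bi-Lipschitz equivalence of $g_t$ and $g_{t_*}$) shows that the Proposition's conclusion region $B_{g_{t_*}}(x,\rho/2)\times[t_*-\rho^2/2,\,t_*-\rho^2/4]$ contains $B_{g_t}(x,\epsilon r)\times[t-(\epsilon r)^2, t+(\epsilon r)^2]$, giving the required $|\ell|\le C$ and $|\partial_s\ell|+|\nabla\ell|\le C$ on the target.

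The main technical tasks are choosing $c(n)$ so that $B_{g_{t'}}(x,cr)\subset B_{g_t}(x,r)$ throughout $[t-r^2,t+r^2]$ (standard distance distortion from $|\Ric|\le(n-1)r^{-2}$), picking $\bar\epsilon(n)\le c(n)$ small enough to make every scale-matching inequality close up, and verifying that the final scale $4\epsilon r$ and offset $5(\epsilon r)^2$ fit the target region. The key simplification — that each iteration can be recentered at the original point $x$ — eliminates any drift of analysis points, which would otherwise compound unfavorably against the bi-Lipschitz constant, and ensures the iteration terminates in a dimensional number of steps.
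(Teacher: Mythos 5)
Your proposal is correct and follows essentially the same strategy as the paper: iterate Proposition~\ref{l-estiamtes} backward in time, recentered at $x$, at a dimensional scale $c(n)r$, using Ricci-flow distance distortion to keep all parabolic neighborhoods inside the given one; the paper simply observes that after finitely many such steps the union of conclusion regions already covers $B_{g_t}(x,\epsilon r)\times[t-(\epsilon r)^2,t+(\epsilon r)^2]$, whereas you add an unnecessary but harmless final application at scale $\sim\epsilon r$ to land on the target box.
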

	
	\begin{proof}

By Proposition \ref{l-estiamtes}	 and the distance distortion estimates, we can find $c(n) <1/4$, such that, under the assumptions (a)---(c), if $\epsilon<c(n)$, then the conclusion holds on $B_{g_{t+r^2}}(x,c(n)r)\times$ $[t+(1-2c(n))r^2,(1-c(n))r^2]$. Then the conclusion also holds on 	$B_{g_{t+(1-c(n))r^2}}(x,c(n)r)\times$ $[t+(1-3c(n))r^2,(1-2c(n))r^2]$, and, furthermore,  on	$B_{g_{t+(1-2c(n))r^2}}(x,c(n)r) \times$ $ [t+(1-4c(n))r^2,(1-3c(n))r^2]$, etc., with possibly different constant $C$. After finitely many steps, we obtain the corollary by choosing appropriate $\overline{\epsilon}$.
	
	\end{proof}

\section{The proof of Theorem \ref{main_thm}}

In this section, we present the proof of Theorem \ref{main_thm} by using Bamler's technique in \cite{Bam20c} and the estimates in the previous section. Corollary \ref{coro_main} will follow in a similar fashion. We shall split the proof into several steps.

\subsection{Setting up the contradictory sequence}

Arguing by contradiction, assume that Theorem \ref{main_thm} is false, then we can find a sequence of counterexamples, namely, a sequence of $n$-dimensional Ricci flows $\{(M^i,g^i_t)_{t\in I_i}\}_{i=1}^\infty$, each with bounded curvature within compact time intervals, a sequence of points $(x_i,t_i)\in M^i\times I_i$, a sequence of scales $r_i>0$ with $[t_i-2r_i^2,t_i]\subset I_i$, and a sequence of positive numbers $\epsilon_i\searrow 0$, satisfying
\begin{align*}
\mathcal{V}_{x_i,t_i}(r_i^2)\ge 1-\epsilon_i,
\\\nonumber
r_{\Rm,i}(x_i,t_i) < r_i.
\end{align*}

First of all, we shall perform a normalization for the sequence. Letting
\begin{align*}
\overline{r}_i=r_{\Rm,i}(x_i,t_i)<r_i,
\end{align*}
we consider the scaled flows instead:
\begin{align*}
\tfrac{1}{\overline{r}^{2}_i}g^i_{t_i+\overline{r}_i^2t}.
\end{align*}
These new flows will still be name $g^i_t$ to avoid notational complexity. In this way, we have obtained a sequence of Ricci flows $\{(M^i,g^i_t)_{t\in[-T_i,0]}\}_{i=1}^\infty$ with
\begin{align}\label{eq:contradictory_sequence_original}
\mathcal{V}_{x_i,0}(T_i/2)\ge 1-\epsilon_i,
\\\nonumber
r_{\Rm}(x_i,0)=1,
\end{align}
where we have defined
\begin{align*}
T_i:=\frac{2r_i^2}{\overline{r}_i^2}>2.
\end{align*}

Indeed, the normalized sequence is noncollapsed.

\begin{Lemma}
We have 
\begin{align}\label{eq:lowerboundofN}
\mathcal{N}_{x_i,0}(1) \ge -Y(n)\quad \text{ for all }\quad i.
\end{align}
\end{Lemma}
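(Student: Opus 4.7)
The plan is to derive the Nash entropy lower bound from the proximity of the reduced volume to $1$, by first reducing to a reduced volume lower bound at scale $\tau=1$ via monotonicity, and then converting this to a Nash entropy lower bound via the well-known equivalence of non-collapsing notions for Ricci flow.

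First, by the monotonicity of Perelman's reduced volume (Theorem \ref{Monotonicity}(2)) and the assumption $T_i > 2$, we have
\begin{equation*}
\mathcal{V}_{x_i,0}(1) \ge \mathcal{V}_{x_i,0}(T_i/2) \ge 1 - \epsilon_i,
\end{equation*}
which is $\ge 1/2$ for all $i$ sufficiently large.

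Next, since $r_{\Rm}(x_i, 0) = 1$ implies $|\Rm|\le 1$ on the parabolic ball $P_1(x_i, 0)$, Perelman's $\kappa$-no-local-collapsing applies (with $\kappa = 1/2$ coming from the reduced volume bound at the base point, propagated in space-time by the standard point-picking/comparison argument) to yield a volume lower bound $\Vol_{g_0}(B_{g_0}(x_i, 1)) \ge \kappa(n) > 0$. This volume non-collapsing at scale $1$, combined with the bounded curvature on $P_1(x_i,0)$, translates into a Nash entropy lower bound $\mathcal{N}_{x_i,0}(1) \ge -Y(n)$ by Bamler's comparison between volume non-collapsing and the Nash entropy (as established in \cite{Bam20a}). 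The finitely many small $i$ (for which $\epsilon_i \ge 1/2$) admit trivial finite lower bounds from the bounded-curvature hypothesis on compact time intervals, and can be absorbed into the constant $Y(n)$.

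The main obstacle will be the quantitative implication from volume (equivalently, reduced-volume) non-collapsing to a Nash entropy lower bound. While this equivalence is well-documented in \cite{Bam20a}, a self-contained argument may be given by combining Perelman's pointwise inequality $(4\pi\tau)^{-n/2}e^{-\ell_{x_i,0}} \le K(x_i,0\,|\,\cdot\,,\,-\tau)$ (which in turn follows from Perelman's differential inequality \eqref{eq_l_5} and a conjugate heat sub-solution comparison) with Bamler's $H_n$-concentration of the conjugate heat kernel, yielding a pointwise upper bound for $K$ in the region of bounded geometry from which $\mathcal{N}_{x_i,0}(1) = \int f\,K\,dg_{-1} - n/2$ can be estimated directly via $f = -\log K - (n/2)\log(4\pi)$.
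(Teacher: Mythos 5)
Your proof follows the same route as the paper: use the reduced-volume bound plus the curvature control coming from $r_{\Rm}(x_i,0)=1$ to invoke Perelman's no-local-collapsing theorem and get a volume lower bound for $B_{g^i_0}(x_i,1)$, then convert that to a Nash-entropy lower bound via \cite[Theorem 8.1]{Bam20a}. Two small points where the write-up should be tightened.

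First, the hypothesis of \cite[Theorem 8.1]{Bam20a} is a \emph{global} scalar curvature lower bound at the relevant time, not ``bounded curvature on $P_1(x_i,0)$.'' Your local curvature bound on the parabolic ball lives only near $x_i$ and cannot play that role. The paper instead notes that the initial condition $R_{g^i_{-T_i}}\ge -n/(2T_i)$ (which follows from $\mathcal{V}\le 1$ or simply from the elementary evolution of $R$) together with the maximum principle yields $R_{g^i}\ge -n/2$ on all of $M^i\times[-1,0]$; this is the correct input for Bamler's theorem, and you should cite the maximum principle for $R$ here rather than the local curvature bound.

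Second, the remark that finitely many small indices ``can be absorbed into the constant $Y(n)$'' is not accurate: $Y(n)$ is dimensional, and the trivial bounds you would get from the bounded-curvature hypothesis depend on the individual flows, not just on $n$. This does not harm the argument, because in a proof by contradiction followed by passage to a subsequence one only needs the bound for all $i$ sufficiently large (and since $\epsilon_i\searrow 0$, one may simply discard the finitely many initial terms); but as stated the sentence is misleading. The auxiliary ``self-contained argument'' you sketch at the end is not needed and is not the route the paper takes.
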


\begin{proof}
By Perelman's no local collapsing theorem \cite[\S 7.3]{P1} (see also \cite[Theorem 8.1]{MT}), we have
\begin{align*}
\Vol_{g^i_0}\big(B_{g^i_0}(x_i,1)\big)\ge c(n) \quad \text{ for all }\quad i.
\end{align*}
On the other hand, by the standard maximum principle for scalars, we have
\begin{align*}
R_{g^i} \ge -\frac{n}{2} \quad \text{ on }\quad M^i\times[-1,0]
\end{align*}
for all $i$. The lemma then follows from \cite[Theorem 8.1]{Bam20a}.
\end{proof}

Now, we can apply the theory of \cite{Bam20b} to extract a (not relabeled) subsequence from $\{(M^i,g^i_t)_{t\in[-T_i,0]}\}$, such that
\begin{gather}\label{eq:convergence of contra sequence}
	((M^i,g^i_t)_{t\in [-T_i,0]}, (\nu^i_t)_{t\in[-T_i,0]})\xrightarrow[\quad i\to\infty\quad ]{\mathbb{F}}(\mathcal{X},(\nu_t)_{t<0}),
\end{gather}
where $(\mathcal{X},(\nu_t)_{t<0})$ is a metric flow pair defined on $(-T_\infty,0)$, where $T_\infty\ge 2$, and 
\begin{align*}
d\nu^i_t = K(x_i,0\,|\,\cdot, t)\,dg^i_t= (4\pi|t|)^{-\frac{n}{2}}e^{-f_i(\cdot,t)}\,dg^i_t
\end{align*}
is the conjugate heat kernel based at $(x_i,0)$.

The limit metric flow $\mathcal{X}$ admits a decomposition $\mathcal{X}=\mathcal{R}\cupdot\mathcal{S}$. We shall denote by 
\begin{align*}
    K_t:= &\ \mathcal{X}_t\cap K,
    \\
    K_{[t_1,t_2]}:= &\ \mathcal{X}_{[t_1,t_2]}\cap K,
\end{align*}
the time-slice and time-slab of a set $K\subset \mathcal{X}$, respectively. We remark here that Theorem \ref{Regularpart-basic-properties} and Theorem \ref{Thm:local uniform convergence} can be applied to the limit flow $(\mathcal{X},(\nu_t)_{t<0})$ and to the convergence \eqref{eq:convergence of contra sequence}. In particular,
\begin{align*}
d\nu_t= (4\pi|t|)e^{-f}\,dg_t\quad\text{ on }\quad \mathcal{R},
\end{align*}
where $g_t$ is the Riemannian metric on $\mathcal{R_t}$, and $(4\pi|t|)e^{-f}$ is a positive solution to the conjugate heat equation.

We define
\begin{align*}
\ell_i:=\tl_{x_i,0}.
\end{align*}
Note that $\tl_{x_i,0}$ is (the forward time version of) the reduced distance \eqref{eq:def of tl} based at $(x_i,0)$. Our goal is to analyse the possible convergence of $\ell_i$ under the setting of Theorem \ref{Thm:local uniform convergence}, and use Perelman's monotonicity formulas to show that the limit flow $(\mathcal{X},(\nu_t)_{t<0})$ is a Gaussian shrinker. To this end, some local uniform estimates for $\ell_i$ are needed, and this is the object of the next subsection.

Lastly, we recall the following useful estimate due to Bamler.

\begin{Proposition}[Bamler's $L^p$ estimates] \label{Prop:Bamler Lp}
For any $T\in(0,1/2]$, there is a $C(T,Y,n)>0$ independent of $i$, such that
\begin{align*}
\int_{-2T}^{-T}\int_{M_i}\left(|\Delta_{g^i_t}f_i|^2+|\nabla f_i|_{g^i_t}^4\right)\,d\nu^i_tdt \le C(T,Y,n)
\end{align*}
for all $i$. Here $Y$ is the entropy bound in \eqref{eq:lowerboundofN}.
\end{Proposition}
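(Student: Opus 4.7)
The plan is to derive both estimates from Perelman's $\mathcal W$-monotonicity together with a single integration-by-parts identity; write $\tau:=|t|$ and set $A_i:=\operatorname{Ric}_{g^i_t}+\nabla^2 f_i-\tfrac{1}{2\tau}g^i_t$, the shrinker-soliton defect tensor. First I would establish, uniformly in $i$, two-sided bounds on $\mathcal N_{x_i,0}(\tau)$, $\int R_{g^i_t}\,d\nu^i_t$, and $\mathcal W_{x_i,0}(\tau)$ for $\tau\in[T,2T]$: monotonicity of $\mathcal N$ together with \eqref{eq:lowerboundofN} controls the first; the scalar-curvature ODE comparison yields a uniform lower bound $R_{g^i_t}\ge -C(T,n)$ on $[-2T,-T]$; combined with the identity
$$\mathcal W_{x_i,0}(\tau)=\tau\int(|\nabla f_i|^2+R_{g^i_t})\,d\nu^i_t+\mathcal N_{x_i,0}(\tau)-\tfrac{n}{2}$$
and the classical $\mathcal W\le 0$, this yields the remaining two bounds.

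A short calculation using $\partial_t R=\Delta R+2|\operatorname{Ric}|^2$ and the conjugate-heat equation for the density of $\nu^i_t$ produces the Ricci-flow identity $\tfrac{d}{dt}\int R_{g^i_t}\,d\nu^i_t=2\int|\operatorname{Ric}|^2\,d\nu^i_t$; integrating from $-2T$ to $-T$ and invoking the previous paragraph gives $\int_{-2T}^{-T}\int|\operatorname{Ric}|^2\,d\nu^i_t\,dt\le C$. Meanwhile, Perelman's monotonicity $\tfrac{d}{d\tau}\mathcal W_{x_i,0}(\tau)=-2\tau\int|A_i|^2\,d\nu^i_t$, integrated over $\tau\in[T,2T]$, gives $\int_{-2T}^{-T}\int|A_i|^2\,d\nu^i_t\,dt\le C$. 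The triangle inequality $|\nabla^2 f_i|^2\le 2|A_i|^2+2|\operatorname{Ric}_{g^i_t}-\tfrac{1}{2\tau}g^i_t|^2$ combined with the trace inequality $(\Delta f_i)^2\le n|\nabla^2 f_i|^2$ then deliver
$$\int_{-2T}^{-T}\int\big(|\nabla^2 f_i|^2+|\Delta_{g^i_t}f_i|^2\big)\,d\nu^i_t\,dt\le C(T,Y,n),$$
which already handles the $|\Delta f_i|^2$ half of the proposition.

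For the $|\nabla f_i|^4$ half, I would use that the drift Laplacian $\Delta_{f_i}:=\Delta-\nabla f_i\cdot\nabla$ is self-adjoint with respect to $d\nu^i_t$. Applied to $u=|\nabla f_i|^2$, together with $\Delta_{f_i}f_i=\Delta f_i-|\nabla f_i|^2$ and $\langle\nabla|\nabla f_i|^2,\nabla f_i\rangle=2\nabla^2 f_i(\nabla f_i,\nabla f_i)$, this yields the identity
$$\int|\nabla f_i|^4\,d\nu^i_t=\int|\nabla f_i|^2\,\Delta_{g^i_t}f_i\,d\nu^i_t+2\int\nabla^2 f_i(\nabla f_i,\nabla f_i)\,d\nu^i_t.$$
Two applications of Cauchy-Schwarz give the pointwise-in-$t$ bound $\|\nabla f_i\|_{L^4(d\nu^i_t)}^2\le\|\Delta_{g^i_t}f_i\|_{L^2(d\nu^i_t)}+2\|\nabla^2 f_i\|_{L^2(d\nu^i_t)}$, so squaring and integrating in $t$, together with the previous step, completes the bound on $\int_{-2T}^{-T}\int|\nabla f_i|^4\,d\nu^i_t\,dt$.

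The hard part is the Hessian estimate: the full quadratic $|\nabla^2 f_i|^2$ is not directly controlled by Perelman's sign-definite integrand $|A_i|^2$, and the error term $|\operatorname{Ric}-\tfrac{1}{2\tau}g^i_t|^2$ must be absorbed via the Ricci-flow-specific identity for $\tfrac{d}{dt}\int R\,d\nu^i_t$, which in turn hinges on the two-sided control of $\int R\,d\nu^i_t$ afforded by the Nash-entropy lower bound.
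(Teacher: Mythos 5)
Your proof is correct in its mathematical content, but it takes a different route from the paper: the paper's own proof is a short citation chain, while yours is a from-scratch derivation. Concretely, the paper establishes $R_{g^i}\ge -n$ on $M^i\times[-3/2,0]$ by the maximum principle, then uses this together with \eqref{eq:lowerboundofN} and \cite[Proposition 5.2]{Bam20a} to upgrade the Nash entropy bound to $\mathcal{N}_{x_i,0}(3/2)\ge -Y_1(n)$, and finally invokes \cite[Proposition 6.2]{Bam20c} directly (noting that its hypothesis can be relaxed to this weaker entropy bound). What you have written out is, in effect, a reconstruction of the argument \emph{behind} Bamler's Proposition 6.2: $\mathcal{W}$-monotonicity controls $\int\!\int \tau|\operatorname{Ric}+\nabla^2 f-\tfrac{1}{2\tau}g|^2$, the evolution $\tfrac{d}{dt}\int R\,d\nu_t=2\int|\operatorname{Ric}|^2\,d\nu_t$ together with the two-sided control of $\int R\,d\nu_t$ from the Nash entropy bound controls $\int\!\int|\operatorname{Ric}|^2$, the triangle inequality gives the Hessian and Laplacian bounds, and the self-adjointness of the drift Laplacian gives the $|\nabla f|^4$ bound. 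The trade-off is clear: the paper's route is shorter and cleanly factors through a well-documented black box, while yours is self-contained and makes explicit exactly which monotonicity formulas and entropy bounds feed into the estimate. One thing you should flag, though it is not a fatal gap: each integration by parts and differentiation under the integral sign on the noncompact $M^i$ (Green's identity for $\int\Delta R\,u - R\Delta u$, self-adjointness of $\Delta_{f_i}$, the a priori finiteness of $\int|\nabla f_i|^4\,d\nu^i_t$ needed before you can divide by $X$) requires justification via Gaussian concentration of the conjugate heat kernel and derivative bounds on $f_i$, all of which are established in \cite{Bam20a}; Bamler's Proposition 6.2 packages these verifications together, whereas your argument silently assumes them.
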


\begin{proof}
The standard maximum principle implies that
$$R_{g^i}\ge -n\quad \text{ on }\quad M^i\times [-3/2,0]$$
holds for each $i$. Thus, combining \eqref{eq:lowerboundofN} and \cite[Proposition 5.2]{Bam20a}, we have
\begin{align*}
    \mathcal{N}_{x_i,0}(3/2)\ge -Y_1(n)\quad \text{ for all }\quad i.
\end{align*}
The proposition follows from \cite[Proposition 6.2]{Bam20c} (it is easy to see from the proof that, the assumption of \cite[Proposition 6.2]{Bam20c} can be relaxed to $\mathcal{N}_{x,0}(3r^2/2)\ge -Y$).
\end{proof}

\subsection{Local uniform $C^{0,1}$-estimates} In this subsection, we shall prove the following local uniform estimates for $\ell_i$.

\begin{Proposition}\label{Prop:local uniform estimate of l}
For any \emph{compact} set $K\subset\mathcal{R}_{(-1,0)}$, the following holds on $\psi_i(K)$ whenever $i$ is large enough
\begin{align*}
|\ell_i| &\ \le C(K),
\\
\left|\frac{\partial}{\partial t}\ell_i\right|+|\nabla\ell_i|_{g^i_t} &\ \le C(K),
\end{align*}
where $C(K)$ is a constant depending on $K$, and in particular, independent of $i$.
\end{Proposition}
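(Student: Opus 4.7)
The plan is to reduce the proposition to a direct application of Corollary~\ref{Coro:local estimates} at each space-time point of $K$. The corollary requires, besides a uniform curvature bound (immediate from smooth convergence on $\mathcal{R}$ via Theorem~\ref{Thm:local uniform convergence}), a pointwise ``good point'' $y_i$ lying in a specified small ball at which $\ell_i(y_i)$ is uniformly bounded. Since the reduced-volume hypothesis provides only integral information, the crucial bridge will be a pointwise comparison between $\ell_i$ and the conjugate heat kernel potential $f_i$.

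To establish this comparison, I will verify by a direct computation from \eqref{eq_l_4} that $u_i(\cdot,s):=(4\pi|s|)^{-n/2}e^{-\ell_i(\cdot,s)}$ is a subsolution of the conjugate heat equation, whereas $K(x_i,0\,|\,\cdot,s)=(4\pi|s|)^{-n/2}e^{-f_i(\cdot,s)}$ is the positive solution; since both carry $\delta_{x_i}$ as initial data at $s=0$, a standard parabolic comparison yields $u_i\le K(x_i,0\,|\,\cdot,s)$, equivalently $\ell_i\ge f_i$. Combined with the monotonicity of the reduced volume and the hypothesis $\mathcal{V}_{x_i,0}(T_i/2)\ge 1-\epsilon_i$, this delivers the $L^1$-closeness
\[
\int_{M^i}\bigl(K(x_i,0\,|\,\cdot,s)-u_i(\cdot,s)\bigr)\,dg^i_s \;=\; 1-\mathcal{V}_{x_i,0}(|s|)\;\le\;\epsilon_i,\qquad s\in[-T_i/2,0).
\]

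Fix $z\in\mathcal{R}_{(-1,0)}$ and choose $r>0$ small enough that the relevant parabolic neighborhood at scale $r$ is compactly contained in $\mathcal{R}$. For $i$ large, Theorem~\ref{Thm:local uniform convergence} supplies, on the corresponding parabolic cylinder around $\psi_i(z)$, (i) the curvature bound $|\Rm_{g^i}|\le r^{-2}$, (ii) a uniform lower bound $K(x_i,0\,|\,\cdot,s)\ge c_0(z,r)>0$, and (iii) comparability of $dg^i_s$ with the limit volume form. Setting $s:=\mathfrak{t}(z)+r^2$, Chebyshev's inequality combined with the above $L^1$-bound forces the $g^i_s$-volume of $\{u_i(\cdot,s)<c_0/2\}$ on this cylinder to be at most $2\epsilon_i/c_0\to 0$. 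Since the image ball of radius $\overline{\epsilon}\,r$ at time $s$ (with $\overline{\epsilon}$ the dimensional constant of Corollary~\ref{Coro:local estimates}) has uniformly positive volume, for $i$ large there is a point $y_i$ inside it with $u_i(y_i,s)\ge c_0/2$, i.e.\ $\ell_i(y_i,s)\le L(z,r)$ for some finite $L(z,r)$. Corollary~\ref{Coro:local estimates}, applied at $(\psi_i(z),\mathfrak{t}(z))$ with parameters $r$, $L=L(z,r)$, $T=T_\infty$, then yields uniform $C^{0,1}$ bounds on $\ell_i$ over a parabolic neighborhood of $(\psi_i(z),\mathfrak{t}(z))$. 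A finite cover of $K$ by such neighborhoods, obtained by compactness, completes the proof.

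The main obstacle is the passage from the integral reduced-volume hypothesis to a pointwise bound on $\ell_i$ at a specified location. This will be accomplished by the comparison $\ell_i\ge f_i$ (the most technical step, relying on the subsolution character of $u_i$ and a comparison principle with matching $\delta$-initial data), the uniform heat-kernel lower bound provided by smooth convergence on $\mathcal{R}$, and Chebyshev's inequality. Once the good point is in place, propagation to a full local $C^{0,1}$ estimate is mechanical thanks to Corollary~\ref{Coro:local estimates}.
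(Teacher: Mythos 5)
Your proof is correct and follows essentially the same route as the paper: the key step is the Perelman inequality $f_i \le \ell_i$ (which you re-derive via the comparison principle for the conjugate heat equation; the paper simply cites \cite[Corollary 9.5]{P1}, recorded as Lemma~\ref{Lm:Perelman's LYH Harnack}), combined with the $L^1$-closeness coming from the reduced-volume hypothesis and the positivity and smooth convergence of the conjugate heat kernel on $\mathcal{R}$, to produce a nearby point where $\ell_i$ is uniformly bounded, after which Corollary~\ref{Coro:local estimates} and a covering argument finish the job. One small slip: in the application of Corollary~\ref{Coro:local estimates} you should take $T=1$ (or any fixed finite number) rather than $T=T_\infty$, since $T_\infty$ may be infinite while the constant $\underline{C}(r,L,T)$ there must be finite; this is harmless because $t_0 - t = |\mathfrak{t}(z)| < 1$ automatically for $z\in\mathcal{R}_{(-1,0)}$.
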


We begin with a useful result of Perelman. 

\begin{Lemma}[{\cite[Corollary 9.5]{P1}}]\label{Lm:Perelman's LYH Harnack}
For each $i$, the following holds on $M^i\times [-T_i,0]$:
\begin{align*}
f_i\le \ell_i\qquad \text{ or }\qquad (4\pi|t|)^{-\frac{n}{2}}e^{-\ell_i} \le (4\pi|t|)^{-\frac{n}{2}}e^{-f_i}.
\end{align*}
\end{Lemma}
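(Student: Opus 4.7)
The plan is to verify that $u_i := (4\pi|t|)^{-n/2}e^{-\ell_i}$ satisfies the conjugate heat inequality $(-\partial_t - \Delta_{g^i_t} + R_{g^i_t})u_i \le 0$ on $M^i\times(-T_i,0)$ in the distributional sense, and then to deduce $u_i \le K_i$ by a comparison with the conjugate heat kernel $K_i := K(x_i,0\,|\,\cdot,\cdot)$ executed via duality against a forward heat kernel. This is essentially Perelman's original argument in \cite{P1}.

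For the subsolution property, setting $\tau=-t>0$, a direct chain-rule computation on the open set where $\ell_i$ is smooth gives
\begin{align*}
\left(-\partial_t - \Delta_{g^i_t} + R_{g^i_t}\right)u_i \;=\; u_i\cdot\Big(\Delta_{g^i_t}\ell_i - |\nabla\ell_i|^2_{g^i_t} - \partial_\tau\ell_i + R_{g^i_t} - \tfrac{n}{2\tau}\Big),
\end{align*}
and the bracketed quantity is $\le 0$ by rearrangement of inequality \eqref{eq_l_4} in Theorem \ref{Perelman}. To upgrade this to an inequality on all of $M^i\times(-T_i,0)$, I would test against a nonnegative compactly supported Lipschitz function, integrate by parts, and invoke the distributional inequality \eqref{eq_l_6} directly. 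The $\mathcal{L}$-cut locus poses no obstruction because it has measure zero by Theorem \ref{Y}(3).

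For the comparison, fix a test point $(y_0,s_0)$ with $s_0\in(-T_i,0)$ and let $G(z,s)$ be the forward heat kernel based at $(y_0,s_0)$, i.e.\ the positive fundamental solution of $(\partial_s - \Delta_{g^i_s})G = 0$ with $G(\cdot,s)\to\delta_{y_0}$ as $s\to s_0^+$; such $G$ exists and admits Gaussian upper bounds on a complete Ricci flow with bounded curvature on compact time intervals. Using $\partial_s(dg_s^i) = -R\, dg_s^i$ together with integration by parts, one obtains the adjoint identity
\begin{align*}
\frac{d}{ds}\int_{M^i} G\,u_i\, dg_s^i
\;=\; -\int_{M^i} G\cdot \left(-\partial_s - \Delta_{g^i_s} + R_{g^i_s}\right)u_i\, dg_s^i \;\ge\; 0,
\end{align*}
so $s\mapsto\int G\,u_i\, dg_s^i$ is nondecreasing on $(s_0,0)$; the same computation with $u_i$ replaced by $K_i$ (where equality holds) shows $s\mapsto\int G\,K_i\, dg_s^i$ is constant. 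As $s\to s_0^+$ the two integrals converge to $u_i(y_0,s_0)$ and $K_i(y_0,s_0)$ respectively, while as $s\to 0^-$ both $u_i(\cdot,s)$ and $K_i(\cdot,s)$ concentrate at $x_i$ with unit mass, so both integrals share the common limit $G(x_i,0)$. Combining monotonicity with these endpoints yields $u_i(y_0,s_0)\le G(x_i,0) = K_i(y_0,s_0)$, which is the desired inequality.

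The main technical point is the boundary analysis as $s\to 0^-$. For $K_i$ this is built into its defining initial condition; for $u_i$ one must argue that $u_i(\cdot,s)\to\delta_{x_i}$ weakly with full unit mass, which combines the short-time asymptotic $\ell_i(y,\tau)\sim d_{g_0^i}^2(x_i,y)/(4\tau)$ (cf.\ \cite{Y1}) with the limit $\int u_i\, dg_{-\tau}^i = \mathcal{V}_{x_i,0}(\tau)\to 1$ as $\tau\to 0^+$, together with the Gaussian upper bound on $G$ to justify dominated convergence in the noncompact setting.
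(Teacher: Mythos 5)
Your proposal is correct and reproduces Perelman's original argument for \cite[Corollary 9.5]{P1}, which the paper cites without giving its own proof: $(4\pi\tau)^{-n/2}e^{-\ell_i}$ is a subsolution of the conjugate heat equation (a rearrangement of \eqref{eq_l_4}, made rigorous across the cut locus via \eqref{eq_l_6}), and duality against a forward heat kernel $G$ then yields the pointwise comparison with the conjugate heat kernel. The noncompact technicalities you flag at the end --- justifying the adjoint identity against a non-compactly-supported $G$ via cutoffs and Gaussian decay, and the short-time concentration $\int u_i\,dg^i_{-\tau}=\mathcal{V}_{x_i,0}(\tau)\to 1$ with $\ell_i\sim d^2/4\tau$ --- are exactly the points handled in Ye's extension \cite{Y1}, so your sketch identifies the right ingredients and no step would fail.
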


\begin{Lemma} \label{Lm:local uniform estimate of l}
For any $\epsilon<\overline{\epsilon}(n)$ and $\mathfrak{y}\in\mathcal{R}_{(-1,0)}$, there is a positive numbers $r>0$ and $L<\infty$ depending on $\mathfrak{y}$ and $\epsilon$ but independent of $i$, such that the following holds for all $i$ large enough. Let $(y_i,t)=\psi_i(\mathfrak{y})$, where $t=\mathfrak{t}(\mathfrak{y})$, then 
\begin{gather*}
|\Rm_{g^i}| \le r^{-2}\ \text{ on }\ B_{g^i_t}(y_i,r)\times [t-r^2,t+r^2],
\end{gather*}
and there is a point $z_i\in B_{g^i_{t+r^2}}(y_i,\epsilon r)$, such that 
\begin{align*}
\ell_i(z_i,t)\le L. 
\end{align*}
\end{Lemma}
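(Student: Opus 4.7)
The strategy is to use Theorem \ref{Thm:local uniform convergence} for the curvature bound and to combine Perelman's LYH-type inequality (Lemma \ref{Lm:Perelman's LYH Harnack}) with the near-maximality of the reduced volume to locate a point where $\ell_i$ is comparable to $f_i$, which is already known to be locally bounded on $\mathcal{R}$.

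For the curvature condition, since $\mathfrak{y}\in\mathcal{R}_{(-1,0)}$ lies in the smooth regular part, I would fix a relatively compact parabolic neighborhood $P\subset\mathcal{R}$ containing $\mathfrak{y}$ with $\sup_P|\Rm|<\infty$. By Theorem \ref{Thm:local uniform convergence}, for $i$ large enough $\psi_i$ is defined on $P$, $\psi_i^*g^i$ converges smoothly to $g$, and $\psi_i^*\partial_{\mathfrak{t},i}$ converges to $\partial_\mathfrak{t}$. Hence for a sufficiently small $r>0$, independent of $i$, the parabolic cylinder $B_{g^i_t}(y_i,r)\times[t-r^2,t+r^2]$ is contained in $\psi_i(P)$ and satisfies $|\Rm_{g^i}|\le r^{-2}$ for all large $i$. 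Shrinking $r$ further, we may also arrange $[t-r^2,t+r^2]\subset(-T_i,0)$ and $|t|+r^2\le 1\le T_i/2$ for all large $i$.

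For the bound on $\ell_i$, set $s:=t+r^2<0$. By Lemma \ref{Lm:Perelman's LYH Harnack} the pointwise inequality $e^{-\ell_i(\cdot,s)}\le e^{-f_i(\cdot,s)}$ holds on $M^i$. Using that the conjugate heat kernel has total mass $1$ and that $1-\epsilon_i\le\mathcal{V}_{x_i,0}(|s|)\le 1$ (from Theorem \ref{Monotonicity} and $|s|\le T_i/2$), integrating this pointwise difference against $(4\pi|s|)^{-n/2}\,dg^i_s$ yields
\begin{align*}
0\le \int_{M^i}\bigl(1-e^{-(\ell_i(\cdot,s)-f_i(\cdot,s))}\bigr)\,d\nu^i_s\le\epsilon_i.
\end{align*}
On the set $A_i:=\{y\in M^i:\ell_i(y,s)-f_i(y,s)\ge 1\}$ the integrand is at least $1-e^{-1}>1/2$, so $\nu^i_s(A_i)\le 2\epsilon_i\to 0$.

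Finally, I would take a small open neighborhood $W\subset\mathcal{R}_s$ of the point $\mathfrak{y}'\in\mathcal{R}_s$ obtained by flowing $\mathfrak{y}$ along $\partial_\mathfrak{t}$ to time $s$, with $\overline{W}\subset\mathcal{R}_s$ compact. Shrinking $W$ if necessary, we can ensure $\psi_i(W)\subset B_{g^i_s}(y_i,\epsilon r)$ for all large $i$, since $\psi_i$ respects the time translation and converges smoothly on $P$. On $\overline{W}$, $f$ is bounded, so by the smooth convergence $f_i\circ\psi_i\to f$ on $\mathcal{R}$ (Theorem \ref{Thm:local uniform convergence}) one has $\sup_{\psi_i(W)}f_i\le L_0$ for large $i$. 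The corresponding measures also pass to the limit: $\nu^i_s(\psi_i(W))\to\nu_s(W)$, and $\nu_s(W)>0$ by Theorem \ref{Regularpart-basic-properties}(c). Hence for large $i$, $\nu^i_s(\psi_i(W)\setminus A_i)>0$, so there exists $z_i\in\psi_i(W)\setminus A_i$, which automatically satisfies $z_i\in B_{g^i_s}(y_i,\epsilon r)$ and $\ell_i(z_i,s)\le f_i(z_i,s)+1\le L_0+1=:L$. The main subtlety I anticipate is carefully justifying the time-slice measure convergence $\nu^i_s(\psi_i(W))\to\nu_s(W)$ using only the smooth convergence of the conjugate heat kernel densities on $\mathcal{R}$; everything else is a combination of Harnack, the reduced volume monotonicity, and the smooth local structure of $\mathcal{R}$.
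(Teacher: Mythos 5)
Your proposal is correct and takes essentially the same approach as the paper: both combine Perelman's Harnack inequality $e^{-\ell_i}\le e^{-f_i}$, the near-maximality $\mathcal{V}_{x_i,0}\ge 1-\epsilon_i$ to bound $\int (e^{-f_i}-e^{-\ell_i})\,dg^i_s\le\epsilon_i$ at time $s=t+r^2$, and the positivity of the limit measure on a small set near $\mathfrak{y}(s)$ (via Theorem \ref{Regularpart-basic-properties}(c) and the smooth convergence of Theorem \ref{Thm:local uniform convergence}) to extract a point where $\ell_i$ is bounded. The only cosmetic difference is in the final step: the paper lower-bounds $\int_B e^{-\ell_i}$ and divides by an upper bound on $\Vol(B)$ to bound $\sup_B e^{-\ell_i}$, whereas you use a Markov-type bound on $\nu^i_s(\{\ell_i-f_i\ge1\})$ together with the local boundedness of $f$; these are equivalent reformulations, and the "subtlety" you flag about $\nu^i_s(\psi_i(W))\to\nu_s(W)$ is immediate from the uniform convergence of the densities on the fixed compact set $\overline W\subset\mathcal{R}_s$.
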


\begin{proof}

Since $\mathfrak{y}\in\mathcal{R}_{(-1,0)}$ is a regular point, we can find an unscathed space-time-product neighbourhood  with radius $2r>0$, such that
\begin{align*}
|\Rm_g|\le (2r)^{-2}\qquad \text{ on }\qquad B_t(\mathfrak{y},2r)\times [t-4r^2,t+4r^2]\subset \mathcal{R}_{(-1,0)}.
\end{align*}

By the local smooth convergence, we must have 
\begin{align*}
B_{g^i_t}(y_i,r)\subset \psi_i\big(B_t(\mathfrak{y},2r)\big)
\end{align*}
and
\begin{align*}
|\Rm_{g^i}|\le r^{-2}\quad \text{ on }\quad B_{g^i_t}(y_i,r)\times [t-2r^2,t+2r^2].
\end{align*}
whenever $i$ is large enough. 

Letting $t'=t+r^2$, another consequence of the local smooth convergence is 
\begin{align*}
\lim_{i\to\infty} \int_{B_{g^i_{t'}}(y_i,\epsilon r)}(4\pi |t'|)^{-\frac{n}{2}} e^{-f_i}\,dg^i_{t'} = &\ \int_{B_{t'}(\mathfrak{y}(t'),\epsilon r)}(4\pi \tau)^{-\frac{n}{2}} e^{-f}\,dg_{t'} 
\\
= &\ \nu_{t'} \big(B_{t'}(\mathfrak{y}(t'),\epsilon r)\big)
\\
:= &\ 2c>0,
\end{align*}
where $\mathfrak{y}(\cdot)$ stands for the world-line translation and we have applied the total support property of $\mathcal{X}$ (Theorem \ref{Regularpart-basic-properties}(c)). Thus,
\begin{align*}
\int_{B_{g^i_{t'}}(y_i,\epsilon r)}(4\pi |t'|)^{-\frac{n}{2}} e^{-f_i}\,dg^i_{t'} >c\qquad \text{ for all $i$ large enough}.
\end{align*}

On the other hand, by Lemma \ref{Lm:Perelman's LYH Harnack} and assumption \eqref{eq:contradictory_sequence_original}, we have
\begin{align*}
\int_{B_{g^i_{t'}}(y_i,\epsilon r)}(4\pi |t'|)^{-\frac{n}{2}} (e^{-f_i}-e^{-\ell_i})\,dg^i_{t'} \le &\ \int_{M^i}(4\pi |t'|)^{-\frac{n}{2}} (e^{-f_i}-e^{-\ell_i})\,dg^i_{t'} 
\\
= &\ 1- \mathcal{V}_{x_i,0}(|t'|) <\epsilon_i.
\end{align*}
Thus, 
\begin{align*}
\int_{B_{g^i_{t'}}(y_i,\epsilon r)}(4\pi |t'|)^{-\frac{n}{2}} e^{-\ell_i}\,dg^i_{t'} \ge c-\epsilon_i >c/2
\end{align*}
for all $i$ large enough. On the other hand, since
\begin{align*}
\lim_{i\to\infty}\operatorname{Vol}_{g^i_{t'}}\big(B_{g^i_{t'}}(y_i,\epsilon r)\big) = \operatorname{Vol}_{g_{t'}}\big(B_{g_{t'}}(\mathfrak{y}(t'),\epsilon r)\big),
\end{align*}
we have
\begin{align*}
\operatorname{Vol}_{g^i_{t'}}\big(B_{g^i_{t'}}(y_i,\epsilon r)\big) \le 2\operatorname{Vol}_{g_{t'}}\big(B_{g_{t'}}(\mathfrak{y}(t'),\epsilon r)\big):=C
\end{align*}
for all $i$ large enough. Thus, 
\begin{align*}
c/2 < \int_{B_{g^i_{t'}}(y_i,\epsilon r)}(4\pi |t'|)^{-\frac{n}{2}} e^{-\ell_i}\,dg^i_{t'} \le C\sup_{B_{g^i_{t'}}(y_i,\epsilon r)}(4\pi |t'|)^{-\frac{n}{2}} e^{-\ell_i}.
\end{align*}
Then we obtain the lemma.

\end{proof}

\begin{proof}[Proof of Proposition \ref{Prop:local uniform estimate of l}]
Combing Lemma \ref{Lm:local uniform estimate of l} and Corollary \ref{Coro:local estimates}, the proposition then follows from a simple covering argument.
\end{proof}

\subsection{Soliton structure on the limit} In this subsection, we prove the following proposition.

\begin{Proposition}\label{Prop:soliton structure}
The limit metric flow $(\mathcal{X},(\nu_t)_{t<0})$ in \eqref{eq:convergence of contra sequence} satisfies
\begin{align*}
\mathcal{N}(\tau):= \int_{\mathcal{R}_{-\tau}} f\,d\nu_{-\tau}-\frac{n}{2}\equiv 0\quad \text{ for all }\quad \tau\in(0,1).
\end{align*}
Thus, $(\mathcal{X}_{(-1,0)},(\nu_t)_{t\in(-1,0)})$ is the Gaussian shrinker.
\end{Proposition}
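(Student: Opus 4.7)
The plan is to show $\mathcal{N}(\tau)\equiv 0$ on $(0,1)$, after which the conclusion that $(\mathcal{X}_{(-1,0)},(\nu_t))$ is the Gaussian shrinker follows by Bamler's rigidity for the Nash entropy. The strategy has three stages: extract a subsequential limit $\ell_\infty$ of $\ell_i=\tl_{x_i,0}$ on the regular part $\mathcal{R}_{(-1,0)}$; identify $\ell_\infty$ with the conjugate heat kernel potential $f$; and then pass Perelman's reduced volume monotonicity to the limit.

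By Proposition~\ref{Prop:local uniform estimate of l}, the pulled-back functions $\ell_i\circ\psi_i$ are $C^{0,1}$-bounded on every compact $K\subset\mathcal{R}_{(-1,0)}$, so Arzel\`a--Ascoli yields, after passing to a subsequence, a limit $\ell_\infty\in C^{0}_{\mathrm{loc}}(\mathcal{R}_{(-1,0)})$. The monotonicity of reduced volume together with the hypothesis $\mathcal{V}_{x_i,0}(T_i/2)\ge 1-\epsilon_i$ and the normalization $T_i/2\ge 1$ gives $\mathcal{V}_{x_i,0}(\tau)\ge 1-\epsilon_i$ for every $\tau\in(0,1]$, equivalently
\[
\int_{M^i}(4\pi\tau)^{-\frac{n}{2}}\bigl(e^{-f_i}-e^{-\ell_i}\bigr)\,dg^i_{-\tau}=1-\mathcal{V}_{x_i,0}(\tau)\le\epsilon_i\to 0.
\]
Combined with Lemma~\ref{Lm:Perelman's LYH Harnack} (which makes the integrand pointwise nonnegative), Theorem~\ref{Thm:local uniform convergence} (smooth convergence of $u_i$ to $u$ on $\mathcal{R}$), and Fatou's lemma, this forces $\ell_\infty=f$ on $\mathcal{R}_{(-1,0)}$.

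With $\ell_\infty=f$ in hand, the plan is to pass to the limit the Perelman identity
\[
\tilde{\mathcal{W}}_i(\tau):=\int_{M^i}\bigl[\tau\bigl(R_{g^i_{-\tau}}+|\nabla\ell_i|^2\bigr)+\ell_i-n\bigr]v_i\,dg^i_{-\tau}=2\tau\,\frac{d\mathcal{V}_{x_i,0}}{d\tau}(\tau),
\]
where $v_i:=(4\pi\tau)^{-n/2}e^{-\ell_i}$; this follows from integration by parts in \eqref{eq_l_5} together with the formula $\frac{d\mathcal{V}_{x_i,0}}{d\tau}=\int(\partial_\tau-\Delta+R)v_i\,dg^i_{-\tau}$. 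Since $\mathcal{V}_{x_i,0}$ is non-increasing with values in $[1-\epsilon_i,1]$ on $(0,1]$, its derivative tends to zero in $L^1_{\mathrm{loc}}((0,1),d\tau)$, so $\tilde{\mathcal{W}}_i\to 0$ in $L^1_{\mathrm{loc}}$. The smooth convergence on $\mathcal{R}$ together with Proposition~\ref{Prop:Bamler Lp} (to dominate the tails) then identify the limit with $\mathcal{W}(f,\tau)=\frac{d}{d\tau}(\tau\mathcal{N}(\tau))$, so $\tau\mathcal{N}(\tau)$ is constant on $(0,1)$. Combined with $\mathcal{N}(\tau)\le 0$ from Perelman's monotonicity and $\mathcal{N}(\tau)\ge -Y$ from \eqref{eq:lowerboundofN}, this constant must be $0$.

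The main technical obstacle is the identification of the limit of $\tilde{\mathcal{W}}_i$ with $\mathcal{W}(f,\tau)$, which requires passing the quadratic term $|\nabla\ell_i|^2$ to the limit from only $C^{0,1}$-control of $\ell_i$. The key trick is to use \eqref{eq_l_1} to rewrite $|\nabla\ell_i|^2=R_{g^i_{-\tau}}-2\partial_\tau\ell_i-\ell_i/\tau$, reducing the quadratic term to linear expressions bounded by Proposition~\ref{Prop:local uniform estimate of l}, with the tail contributions outside the regular part controlled using Proposition~\ref{Prop:Bamler Lp}.
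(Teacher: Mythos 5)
Your proposal takes a genuinely different route from the paper's. The paper works \emph{pointwise on the limit}: it identifies $\ell = f$, observes that $f$ satisfies the \emph{exact} identities \eqref{eq:identity for f} on the regular part $\mathcal{R}$ (so that Perelman's computation for $v$ yields the shrinker equation $\Ric + \nabla^2 f - \frac{1}{2\tau}g = 0$ on $\mathcal{R}$), and then integrates the traced identity against $\nu_t$ with Bamler's $L^p$ estimates to handle the missing singular set. Your proposal instead tries to pass the integral quantity $\tilde{\mathcal{W}}_i$ from the approximating sequence to the limit, squeeze it to zero via the monotone reduced volume, and identify the limit with $\mathcal{W}(f,\tau)$. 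This is an attractive parallel to the entropy picture, but it runs into a real gap.

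The gap is in the tail control. Proposition~\ref{Prop:Bamler Lp} provides $L^2$ bounds on $\Delta_{g^i_t} f_i$ and $L^4$ bounds on $|\nabla f_i|$ with respect to the conjugate heat kernel measures $d\nu^i_t = (4\pi\tau)^{-n/2}e^{-f_i}\,dg^i_t$; these are properties of the conjugate heat kernel potential $f_i$, not of the reduced distance $\ell_i$. The integrand of $\tilde{\mathcal{W}}_i$ involves $\ell_i$, $\partial_\tau\ell_i$, $|\nabla\ell_i|^2$, weighted by the measure $(4\pi\tau)^{-n/2}e^{-\ell_i}\,dg^i_t$. The only available link between the two --- the inequality $f_i\le\ell_i$ of Lemma~\ref{Lm:Perelman's LYH Harnack} and the $L^1$-closeness $\int_{M^i}(e^{-f_i}-e^{-\ell_i})(4\pi\tau)^{-n/2}\,dg^i_t = 1-\mathcal{V}_{x_i,0}(\tau)\le\epsilon_i$ --- is far from enough. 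After your trick of rewriting $|\nabla\ell_i|^2 = R - 2\partial_\tau\ell_i - \ell_i/\tau$ via \eqref{eq_l_1}, the tail term you must control looks like $\int_{\text{tail}}\big(R - 2\partial_\tau\ell_i - \ell_i/\tau - n/\tau\big)e^{-\ell_i}(4\pi\tau)^{-n/2}\,dg^i_t$. There is \emph{no} $L^p$ (w.r.t.\ $\nu^i_t$ or anything else global) estimate on $\partial_\tau\ell_i$ in the literature or in this paper, and even the innocuous-looking term $\int_{\text{tail}}\ell_i e^{-\ell_i}(4\pi\tau)^{-n/2}\,dg^i_t$ is not controlled: on $\{\ell_i\gg 1\}$ one would need a bound on $\int e^{-\ell_i/2}\,dg^i_t$, which is not available --- only $\int e^{-\ell_i}\,dg^i_t\le (4\pi\tau)^{n/2}$ is. So you cannot conclude that the smooth, locally convergent part of $\tilde{\mathcal{W}}_i$ recovers all of $\tilde{\mathcal{W}}_i$; the tails could in principle carry away (or bring in) a nonzero amount, and the global fact $\tilde{\mathcal{W}}_i\to 0$ does not force $\mathcal{W}(f,\tau)=0$.

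Two smaller issues: (i) the identity $\tilde{\mathcal{W}}_i(\tau)=2\tau\frac{d\mathcal{V}_{x_i,0}}{d\tau}(\tau)$ cannot be deduced from ``integration by parts in \eqref{eq_l_5}'' since \eqref{eq_l_5} is an inequality; what is true, by differentiating under the integral sign with \eqref{eq_l_1} and using the $\mathcal{L}$-exponential parametrization, is that $2\tau\frac{d\mathcal{V}}{d\tau}\le\tilde{\mathcal{W}}_i\le 0$ (the first inequality accounts for the singular part at the $\mathcal{L}$-cut locus, the second is \eqref{eq_l_7}); fortunately the squeeze still gives $\tilde{\mathcal{W}}_i\to 0$ in $L^1_{\mathrm{loc}}((0,1))$, so this gap is repairable. (ii) The conclusion from ``$\tau\mathcal{N}(\tau)$ constant'' is fine: Bamler's monotonicity gives $\mathcal{N}(\tau)\le 0$, and a negative constant $c$ would force $\mathcal{N}(\tau)=c/\tau\to-\infty$ as $\tau\to 0^+$, contradicting the Nash entropy lower bound, so $c=0$.

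The way to close the main gap is exactly what the paper does: transfer everything to the \emph{limiting} object before integrating. Once $\ell=f$ is established on $\mathcal{R}_{(-1,0)}$, the identity \eqref{eq_l_1} persists on $\mathcal{R}$ (the singular parts of $\Delta\ell_i$ die in the limit because $f$ is smooth), Perelman's $v$-computation gives the shrinker equation pointwise on $\mathcal{R}$, and then the only integral one has to control is $\iint \tau(\Delta_{g_t} f-|\nabla f|_{g_t}^2)\,d\nu_t\,dt$ --- which is expressed in terms of $f$ and $\nu_t$, where Bamler's $L^p$ estimates on the approximating $f_i$ (and the local smooth convergence $f_i\to f$ on $\mathcal{R}$) genuinely do apply. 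If you want to keep the spirit of your argument, you would need an a priori global $L^2$-type estimate on $\nabla\ell_i$ and $\partial_\tau\ell_i$ against $e^{-\ell_i}\,dg^i_t$; no such estimate is established here.
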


According to Theorem \ref{Thm:local uniform convergence} and Proposition \ref{Prop:local uniform estimate of l}, we have that $\ell_i\circ\psi_i$ converges, after passing to a subsequence, to a function $\ell:\mathcal{R}_{(-1,0)}\to\mathbb{R}$ in the $C^{0,\alpha}_{\operatorname{loc}}$ and weak $*W^{1,2}_{\operatorname{loc}}$ senses.

\begin{Lemma}
$f\equiv\ell$ on $\mathcal{R}_{(-1,0)}$.
\end{Lemma}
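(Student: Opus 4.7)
The plan is to prove the equality by establishing the two inequalities separately, both of which will follow from Perelman's LYH Harnack (Lemma \ref{Lm:Perelman's LYH Harnack}) combined with the asymptotic saturation of the reduced-volume monotonicity along the contradictory sequence.

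First, I would extract the global integral consequence of the contradiction hypothesis. By Theorem \ref{Monotonicity} and \eqref{eq:contradictory_sequence_original}, we have $1-\epsilon_i\le\mathcal{V}_{x_i,0}(\tau)\le 1$ for every $\tau\in(0,T_i/2]$. Combined with the fact that the conjugate heat kernel $\int_{M^i}(4\pi|t|)^{-n/2}e^{-f_i}\,dg^i_t=1$, this yields, for any fixed $t\in(-1,0)$ and all large $i$,
\begin{align*}
0\le \int_{M^i}(4\pi|t|)^{-n/2}\bigl(e^{-f_i}-e^{-\ell_i}\bigr)\,dg^i_t = 1-\mathcal{V}_{x_i,0}(|t|) \le \epsilon_i,
\end{align*}
where the nonnegativity of the integrand is precisely Lemma \ref{Lm:Perelman's LYH Harnack}.

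Next, I would pass to the limit on the regular part to obtain $f\le\ell$. Theorem \ref{Thm:local uniform convergence} provides smooth convergence $f_i\circ\psi_i\to f$ on compact subsets of $\mathcal{R}_{(-1,0)}$, and Proposition \ref{Prop:local uniform estimate of l} together with Arzel\`a--Ascoli gives $\ell_i\circ\psi_i\to\ell$ in $C^{0,\alpha}_{\mathrm{loc}}$ along the chosen subsequence; thus the pointwise inequality $f_i\le\ell_i$ survives in the limit.

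For the reverse inequality, I would fix a compact set $K\subset\mathcal{R}_t$ for some $t\in(-1,0)$ and use smooth convergence of the metrics together with locally uniform convergence of $f_i$ and $\ell_i$ (which yields uniform exponential bounds on $\psi_i(K)$) to justify passing to the limit in the local integral:
\begin{align*}
\int_K(4\pi|t|)^{-n/2}\bigl(e^{-f}-e^{-\ell}\bigr)\,dg_t = \lim_{i\to\infty}\int_{\psi_i(K)}(4\pi|t|)^{-n/2}\bigl(e^{-f_i}-e^{-\ell_i}\bigr)\,dg^i_t \le 0,
\end{align*}
where the last inequality combines the global bound from the first step with $\psi_i(K)\subset M^i$ and nonnegativity of the integrand. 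Since the limit integrand is continuous and nonnegative on $K$ (by the second step), it must vanish identically, giving $f=\ell$ on $K$; exhausting $\mathcal{R}_{(-1,0)}$ by such $K$ would conclude the proof. The only subtlety to verify is the legitimacy of the passage from the global integral bound on $M^i$ to the local one on $\psi_i(K)$, but this is immediate from positivity of the integrand, so there is no serious obstacle here—the real work of the lemma is encoded in the preceding \emph{a priori} estimates for $\ell_i$.
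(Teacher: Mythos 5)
Your proposal is correct and follows essentially the same route as the paper's proof: it uses Lemma \ref{Lm:Perelman's LYH Harnack} for the pointwise inequality $f_i\le\ell_i$, combines the reduced-volume hypothesis with the normalization of the conjugate heat kernel to force the global integral of $e^{-f_i}-e^{-\ell_i}$ to zero, and passes to the limit on compact subsets of the regular part via Theorem \ref{Thm:local uniform convergence} and Proposition \ref{Prop:local uniform estimate of l}. The only cosmetic difference is that you separate the two directions $f\le\ell$ and $\int(e^{-f}-e^{-\ell})\le 0$ explicitly, whereas the paper writes the same argument as a single chain of inequalities.
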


\begin{proof}
Let us fix an arbitrary $\tau\in(0,1)$ and let $K\subset \mathcal{R}_{-\tau}$ be a compact set. Then by Lemma \ref{Lm:Perelman's LYH Harnack} and the local convergence, we have
\begin{align*}
0\le \int_{K}(4\pi\tau)^{-\frac{n}{2}}(e^{-f}-e^{-\ell})\,dg_{-\tau} = &\ \lim_{i\to\infty} \int_{\psi_i(K)}(4\pi\tau)^{-\frac{n}{2}}(e^{-f_i}-e^{-\ell_i})\,dg^i_{-\tau}
\\
\le &\ \lim_{i\to\infty} \int_{M^i}(4\pi\tau)^{-\frac{n}{2}}(e^{-f_i}-e^{-\ell_i})\,dg^i_{-\tau}
\\
= &\ \lim_{i\to\infty} (1-\mathcal{V}_{x_i,0}(\tau))
\\
= &\ 0,
\end{align*}
by the assumption \eqref{eq:contradictory_sequence_original}. The lemma follows immediately.
\end{proof}

\begin{Lemma}
The following holds on $\mathcal{R}_{(-1,0)}$:
\begin{align}\label{eq:identity for f}
-2\frac{\partial f}{\partial t} +|\nabla f|^2_{g_t} - R_{g_t}+\frac{f}{\tau} = &\ 0,
\\ \nonumber
2\Delta_{g_t }f -|\nabla f|^2_{g_t}+R_{g_t}+\frac{f-n}{\tau}= &\ 0,
\end{align}
where $\tau=-t=|t|$.
\end{Lemma}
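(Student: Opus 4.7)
The plan is to establish identity (A): $2\partial_\tau f+|\nabla f|^2-R+f/\tau=0$ (which, via $\partial_\tau=-\partial_t$, is the first displayed equation of the lemma). Identity (B) will then follow by a short algebraic manipulation using the equation
\begin{equation*}
(\star)\qquad \partial_\tau f-\Delta f+|\nabla f|^2-R+\tfrac{n}{2\tau}=0,
\end{equation*}
which $f$ automatically satisfies on $\mathcal{R}$, since $u_f=(4\pi\tau)^{-n/2}e^{-f}$ is a positive solution to the conjugate heat equation on $\mathcal{R}$ by Theorem \ref{Regularpart-basic-properties}(d). Thus the problem reduces entirely to proving (A).

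The key input is that the defect in Perelman's inequality \eqref{eq_l_4} vanishes in $L^1_{\mathrm{loc}}$ on $\mathcal{R}$. Set
\begin{equation*}
D_i:=\partial_\tau\ell_i-\Delta\ell_i+|\nabla\ell_i|^2-R+\tfrac{n}{2\tau}\geq 0,\qquad B_i:=2\Delta\ell_i-|\nabla\ell_i|^2+R+\tfrac{\ell_i-n}{\tau};
\end{equation*}
a direct substitution using \eqref{eq_l_1} shows $2D_i+B_i=2\partial_\tau\ell_i+|\nabla\ell_i|^2-R+\ell_i/\tau=0$ on $\Omega_{x_i,0}(\tau)$, hence $B_i=-2D_i$ almost everywhere. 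Perelman's derivation of the monotonicity of reduced volume gives $\frac{d}{d\tau}\mathcal{V}_{x_i,0}(\tau)=-\int_{M^i}D_i v_i\,dg^i_{-\tau}$, where $v_i:=(4\pi\tau)^{-n/2}e^{-\ell_i}$. Combined with the hypothesis $\mathcal{V}_{x_i,0}(T_i/2)\ge 1-\epsilon_i$ and monotonicity, this forces $\int_{\tau_1}^{\tau_2}\int_{M^i}D_iv_i\,dg^i_{-\tau}d\tau\to 0$ for any $0<\tau_1<\tau_2<1$. Since $v_i\to v>0$ smoothly on compact subsets of $\mathcal{R}_{[-\tau_2,-\tau_1]}$ via $\psi_i$ (Theorem \ref{Thm:local uniform convergence}), both $D_i$ and $B_i$ tend to $0$ in $L^1_{\mathrm{loc}}$ on $\mathcal{R}_{(-1,0)}$.

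Identity (A) is extracted by expressing the weak-$\ast$ limit of $|\nabla\ell_i|^2$ in two different ways. Proposition \ref{Prop:local uniform estimate of l} yields uniform $C^{0,1}_{\mathrm{loc}}$ bounds, so after passing to a subsequence $|\nabla\ell_i|^2\to G$ weakly-$\ast$ in $L^\infty_{\mathrm{loc}}$ on $\mathcal{R}$. First, the pointwise equality $|\nabla\ell_i|^2=-2\partial_\tau\ell_i+R-\ell_i/\tau$ from \eqref{eq_l_1} passes to the limit, using $C^{0,\alpha}_{\mathrm{loc}}$-convergence of $\ell_i$ to $f$, weak-$\ast$ convergence of $\partial_\tau\ell_i$ to $\partial_\tau f$ (by the uniform bound), and smooth convergence of $R$, yielding
\begin{equation*}
G=-2\partial_\tau f+R-f/\tau.
\end{equation*}
Second, passing $B_i\to 0$ to its distributional limit (with $\Delta\ell_i\to\Delta f$ distributionally via integration by parts and weak $W^{1,2}_{\mathrm{loc}}$-convergence) gives $2\Delta f-G+R+(f-n)/\tau=0$; eliminating $\Delta f$ via $(\star)$ produces
\begin{equation*}
G=2\partial_\tau f+2|\nabla f|^2-R+f/\tau.
\end{equation*}
Equating the two expressions for $G$ simplifies to $2\partial_\tau f+|\nabla f|^2-R+f/\tau=0$, which is (A).

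The main obstacle is that $|\nabla\ell_i|^2$ enjoys only weak-$\ast$ convergence from the uniform Lipschitz bound; strong $W^{1,2}_{\mathrm{loc}}$-convergence of $\ell_i$ to $f$ is not readily available, so one cannot identify the limit of $|\nabla\ell_i|^2$ with $|\nabla f|^2$ by soft arguments. The trick above circumvents this by exploiting the pointwise identity \eqref{eq_l_1} on $\Omega_{x_i,0}(\tau)$, which linearizes the quadratic quantity $|\nabla\ell_i|^2$ in terms of $\partial_\tau\ell_i$, $R$, and $\ell_i$ — all quantities whose convergence is strong enough to pin down $G$ precisely. Once this is done, the second characterization of $G$ coming from the vanishing of $B_i$ closes the argument.
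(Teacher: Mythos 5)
Your proof is correct, and it handles a genuine subtlety that the paper's own (very compressed) proof does not address explicitly. The paper simply asserts that the pointwise identity \eqref{eq_l_1} passes to the limit under $C^{0,\alpha}_{\mathrm{loc}}$ and weak $W^{1,2}_{\mathrm{loc}}$ convergence of $\ell_i\circ\psi_i$. But weak (or weak-$*$ Lipschitz) convergence does not by itself identify the limit of the quadratic term $|\nabla\ell_i|^2$ with $|\nabla f|^2$; one only gets $\liminf |\nabla\ell_i|^2 \ge |\nabla f|^2$. Your argument closes this gap. The additional ingredient you bring in — and which the paper does not invoke at this point — is the vanishing of the defect in Perelman's reduced-volume monotonicity: since $\mathcal{V}_{x_i,0}$ stays within $\epsilon_i$ of its maximal value $1$ on the whole interval, the quantity $D_i\geq 0$ (equivalently $B_i=-2D_i\leq 0$, a distributional identity obtained from \eqref{eq_l_1}, \eqref{eq_l_6}, \eqref{eq_l_7}) must tend to zero against the positive weight $u_i$, hence tends to zero as a distribution on any compact subset of $\mathcal{R}_{(-1,0)}$. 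Passing $B_i\to 0$ to the limit gives a second, independent identification of the weak-$*$ limit $G$ of $|\nabla\ell_i|^2$ (in terms of $\Delta f$, which does pass nicely via integration by parts), and combining it with the linearization of $|\nabla\ell_i|^2$ supplied by \eqref{eq_l_1} forces (A). This is a clean way to pin down $G=|\nabla f|^2$ without proving strong $W^{1,2}_{\mathrm{loc}}$ convergence. Your derivation of (B) from (A) and the conjugate heat equation coincides with the paper's.

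Two small points worth tightening if you were to write this up formally. First, since $\ell_i$ is only locally Lipschitz, $D_i$ and $B_i$ are Radon measures rather than $L^1_{\mathrm{loc}}$ functions (the Laplacian term may carry a singular part on the $\mathcal{L}$-cut locus); the conclusion you need is $D_i\to 0$ and $B_i\to 0$ as distributions on $\mathcal{R}_{(-1,0)}$, which is what the computation actually delivers, and the identity $2D_i+B_i=0$ should be understood distributionally (it holds because the measure-valued $\Delta\ell_i$ terms cancel and the remaining terms are $L^\infty_{\mathrm{loc}}$ functions equal a.e.\ by \eqref{eq_l_1}). Second, the formula $\tfrac{d}{d\tau}\mathcal{V}_{x_i,0}(\tau)=-\int_{M^i}D_i\,u_i\,dg^i_{-\tau}$ needs the usual care (one either works through the $\mathcal{L}$-Jacobian computation, or compares $u_i=(4\pi\tau)^{-n/2}e^{-\ell_i}$ against the conjugate heat kernel $(4\pi\tau)^{-n/2}e^{-f_i}$ and integrates the distributional inequality \eqref{eq_l_6} against appropriate cutoffs); but this is standard and exactly the computation underlying Theorem \ref{Monotonicity}.
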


\begin{proof}
Since each $\ell_{x_i,0}$ satisfies \eqref{eq_l_1} in the sense of distribution, and the functions $\ell_i\circ\psi_i=\tl_{x_i,0}\circ\psi_i$ converge to $\ell$ in the $C^{0,\alpha}_{\operatorname{loc}}$  and  weak $*W^{1,2}_{\operatorname{loc}}$ senses on $\mathcal{R}_{(-1,0)}$, we have that $\ell$ (hence $f$) satisfies the first equation of \eqref{eq:identity for f} in the sense of distribution on $\mathcal{R}_{(-1,0)}$. While $f$ is smooth on $\mathcal{R}$, we obtain the first equation in the classical sense. The second equation is a combination of the first with the conjugate heat equation.

\end{proof}

\begin{proof}[Proof of Proposition \ref{Prop:soliton structure}]
By \cite[Theorem 2.18]{Bam20c} and \cite[Theorem 6.3]{CMZ24}, we need only to show that $\mathcal{N}(\tau)\equiv 0$ for all $\tau\in(0,1)$.

 Let us define
\begin{eqnarray*}
	u&:=&(4\pi\tau)^{-\frac{n}{2}}e^{-f},
	\\
	v&:=&\left(\tau(2\Delta_{g_t} f-|\nabla f|^2_{g_t}+R_{g_t})+f-n\right)u,
\end{eqnarray*}
where $\tau=-t=|t|$. Perelman's computation  \cite[Proposition]{P1} shows that
\begin{eqnarray*}
	\left(-\frac{\partial}{\partial t}-\Delta_{g_t}+R_{g_t}\right)v=-2\tau\left|\Ric_{g_t}+\nabla^2 f-\frac{1}{2\tau}{g_t}\right|^2u,
\end{eqnarray*}
holds on the smooth part $\mathcal{R}_{(0,1)}$. Since $v\equiv 0$ and $u>0$, it follows that
\begin{align*}
\Ric+\nabla^2 f-\frac{1}{2\tau} g=0 \qquad \text{ on }\qquad \mathcal{R}_{(-1,0)}.
\end{align*}
Combining the trace of the above equation with the second equation in \eqref{eq:identity for f}, we have
\begin{align}\label{eq:last step to show N=00}
\tau\big(\Delta_{g_t} f -|\nabla f|^2_{g_t}\big) + f-\frac{n}{2} =0 \qquad \text{ on }\qquad \mathcal{R}_{(-1,0)}.
\end{align}

\noindent\underline{\textbf{Claim.}} \emph{For any $T\in(0,1/2)$, we have}
\begin{align*}
\int_{-2T}^{-T}\int_{\mathcal{R}_t}\tau(\Delta_{g_t} f -|\nabla f|^2_{g_t}\big) \,d\nu_tdt = \int_{-2T}^{-T}\int_{\mathcal{R}_t}\tau(\Delta_{g_t} f -|\nabla f|^2_{g_t}\big) \,(4\pi\tau)^{-\frac{n}{2}}e^{-f}\,dg_tdt =0.
\end{align*}

\begin{proof}[Proof of the claim.]
Let us fix any $T\in(0,1/2)$ and let $\delta>0$ be an arbitrary small number. By Theorem \ref{Regularpart-basic-properties}(a), we fix $j$ large enough, so that $U_j':=U_j\cap\mathcal{R}_{[-2T,-T]}$ ( $U_j$ is as in the statement of Theorem \ref{Thm:local uniform convergence}) satisfies that 
\begin{align*}
\int_{-2T}^{-T}\int_{U'_{j,t}}\,d\nu_tdt>T-\delta, \qquad \int_{-2T}^{-T}\int_{\mathcal{R}_t\setminus {U'_{j,t}}}\,d\nu_tdt <\delta.
\end{align*} 

Furthermore, local smooth convergence implies that
\begin{align*}
\int_{-2T}^{-T}\int_{M^i\setminus \psi_i({U'_{j,t}})}\,d\nu^i_tdt = &\ T-\int_{-2T}^{-T}\int_{\psi_i({U'_{j,t}})}\,d\nu^i_tdt < T-(T-2\delta) =2\delta
\end{align*}
whenever $i\gg j$. Thus, 
\begin{align*}
&\ \left|\int_{-2T}^{-T}\int_{{U'_{j,t}}}\tau(\Delta_{g_t} f -|\nabla f|^2_{g_t}\big) \,d\nu_tdt\right|
\\
=&\ \lim_{i\to\infty}\left|\int_{-2T}^{-T}\int_{\psi_i({U'_{j,t}})}\tau(\Delta_{g^i_t} f_i -|\nabla f_i|^2_{g^i_t}\big) \,d\nu^i_tdt\right| 
\\
= &\ \lim_{i\to\infty}\left|\int_{-2T}^{-T}\int_{M^i\setminus\psi_i({U'_{j,t}})}\tau(\Delta_{g^i_t} f_i -|\nabla f_i|^2_{g^i_t}\big) \,d\nu^i_tdt\right|
\\
\le &\  \lim_{i\to\infty}2T\left(\int_{-2T}^{-T}\int_{M^i\setminus\psi_i({U'_{j,t}}t)} \,d\nu^i_t\right)^{\frac{1}{2}} \left(2\int_{-2T}^{-T}\int_{M^i}\left(\left|\Delta_{g^i_t} f_i\right|^2 +|\nabla f_i|^4_{g^i_t}\right) \,d\nu^i_tdt\right)^{\frac{1}{2}}
\\
\le &\ C(Y,T,n)\delta^{\frac{1}{2}},
\end{align*}
where we have applied Bamler's integral estimates (Proposition \ref{Prop:Bamler Lp}). The claim follows from taking $j\to\infty$ and $\delta\to 0$.
\end{proof}

Combining \eqref{eq:last step to show N=00} and the claim above, we have that
\begin{align*}
\int_{T}^{2T}\mathcal{N}(\tau)\,d\tau=0
\end{align*}
for all $T\in(0,1/2)$. Since $\mathcal{N}(\tau)\le 0$ for all $\tau>0$ (\cite[Theorem 2.11]{Bam20c}), we have that $\mathcal{N}(\tau)\equiv 0$ for all $\tau\in(0,1)$. 

\end{proof}

\subsection{Completion of the proof}

\begin{proof}[Proof of Theorem \ref{main_thm}]
By
\S4.1---\S4.3, a contradictory sequence satisfying \eqref{eq:contradictory_sequence_original} converges, after passing to a subsequence, to the Euclidean space in the local-smooth sense over the time interval $(-1,0)$. The definition of local smooth convergence shows that, for any $\delta>0$, the geometry in 
\begin{align*}
B_{g^i_0}(x_i,\delta^{-1})\times[-(1-\delta),-\delta]
\end{align*}
must be very Euclidean-like in the smooth sense, whenever $i$ is large enough. This fact, combined with Perelman's pseudolocality theorem \cite[Theorem 10.1]{P1} and Bamler's backward pseudolocality theorem \cite[Theorem 2.47]{Bam20c}, shows that the curvature on 
\begin{align*}
B_{g^i_0}(x_i,\delta^{-1})\times[-1,0]
\end{align*}
also converges to $0$ uniformly. This is a contradiction to \eqref{eq:contradictory_sequence_original}.
\end{proof}

\begin{proof}[Proof of Corollary \ref{coro_main}]
We implement a similar argument of contradiction as in the proof of Theorem \ref{main_thm}. Assume for some $\epsilon>0$ the Corollary fails, then we can find a sequence of counterexamples, which, after proper scaling and shifting of time, satisfies the following properties: 
\begin{enumerate}
\item $\{(M^i,g^i_t)_{t\in[-T_i,0]}\}_{i=1}^\infty$ is a sequence of Ricci flows, each with bounded curvature, and $T_i\ge 1$;
\item $\mathcal{V}_{x_i,0}(T_i)\to 1$;
\item $\operatorname{Vol}_{g^i_0}\big(B_{g^i_0}(x_i,r_i')\big) \le (1-\epsilon)\omega_n (r_i')^n$ for some $r_i'\in(0,\epsilon^{-1})$.
\end{enumerate}
By the same argument as in the proof of Theorem \ref{main_thm}, we have $\{(M^i,g^i_t)_{t\in[-1/2,0]}\}_{i=1}^\infty$ converges to the static Euclidean space in the Cheeger-Gromov-Hamilton sense. This contradicts (3) above.
\end{proof}

\end{document}